\numberwithin{equation}{section}
\newtheorem{theo}{Theorem}[section]
\newtheorem*{theo*}{Theorem}
\newtheorem{lem}{Lemma}[section]
\newtheorem*{lem*}{Lemma}
\newtheorem{cor}{Corollary}[section]
\newtheorem{prop}{Proposition}[section]
\newtheorem{definition}{Definition}[section]
\newtheorem{example}{Example}[section]
\def\<{\langle}
\def\>{\rangle}
\begin{document}
	\title{An energy gap phenomenon for the Whitney sphere}
	
	\footnote{AMS Subject Classifications: 35K55, 53D12}
	
	\author{Liuyang Zhang}
	\address{Mathematisches Institut, Albert-Ludwigs-Universität Freiburg, Freiburg im Breisgau, 79104, Deutschland.}
	\email{davidmathematica@gmail.com}
	\date{}
	\maketitle
	
\begin{abstract}
	In this paper, we study Lagrangian surfaces satisfying $\nabla^*T=0$ , where $T=-2\nabla^*(\check{A}\lrcorner\omega)$ and $\check{A}$ is the Lagrangian trace-free second fundamental form. We obtain a gap lemma for such a Lagrangian surface.
\end{abstract}

	\section{Introduction}
	
	Gap phenomena form an interesting topic in differential geometry, with many related results to be found. Sacks-Uhlenbeck's well-known energy gap lemma for harmonic maps (see \cite{S81})  is such an example. The following result by Kuwert-Sch\"{a}tzle  for the Willmore surfaces immersed in Euclidean space is one of our motivations:
	

		\begin{theo*}[Gap lemma for Willmore surfaces, \text{\cite[Th.~1.1]{BR} or \cite[Th.~2.7]{KS01}}]
			Let $f:\Sigma\to\mathbb{R}^n$ be a properly immersed (compact or non-compact) Willmore surface, and let $\Sigma_\varrho (0):=f^{-1}(B_\varrho (0))$. Then there exists $\epsilon_0(n)>0$ such that if
			\begin{align*}
			&\liminf_{\rho\to\infty}\frac{1}{\varrho^4}\int_{\Sigma_\varrho (0)}|A|^2d\mu = 0\\
			&\quad and \quad\int_{\Sigma}|\AA{}|^2 d\mu<\epsilon_0(n),
			\end{align*}
			where \AA{} is the trace-free second fundamental form of $f(\Sigma)$,
		    $f$ is an embedded plane or sphere.
		\end{theo*}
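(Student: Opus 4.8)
The plan is to reduce the statement to the single assertion that $f$ is totally umbilic, i.e. $\AA\equiv 0$. Indeed, a connected complete totally umbilic surface in $\mathbb{R}^n$ is an open piece of an affine $2$-plane or of a round $2$-sphere; completeness then forces it to be an entire plane in the non-compact case and a full round sphere in the compact case, both of which are embedded. So the whole problem is to show that the smallness of $\int_\Sigma|\AA|^2$ together with the growth hypothesis on $\int_{\Sigma_\rho(0)}|A|^2$ propagates to the pointwise vanishing of $\AA$.

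The engine is a Simons-type differential inequality combined with an absorption argument. Differentiating the second fundamental form and using the Willmore equation $\Delta_\perp\vec H + Q(\AA)\vec H=0$, where $Q(\AA)\vec H=\sum_{i,j}\langle\AA_{ij},\vec H\rangle\AA_{ij}$, I would derive a pointwise inequality of the schematic form $\tfrac12\Delta|\AA|^2 \ge |\nabla\AA|^2 - c\,|\AA|^2\big(|\AA|^2+|\vec H|^2\big)$ up to a divergence term. Testing this against $\phi^2$ for a cutoff $\phi=\phi_\rho$ that equals $1$ on $\Sigma_\rho(0)$ and is supported in $\Sigma_{2\rho}(0)$ with $|\nabla\phi|\le C/\rho$, integrating by parts, and estimating the resulting cubic and quartic curvature terms by the Michael--Simon Sobolev inequality, the key mechanism is that whenever $\int_\Sigma|\AA|^2<\varepsilon_0(n)$ the nonlinear terms can be absorbed into the left-hand side. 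This should yield an estimate of the form
\begin{equation*}
\int_\Sigma|\nabla\AA|^2\,\phi^2\,d\mu \;\le\; \frac{C}{\rho^{4}}\int_{\Sigma_{2\rho}(0)}|A|^2\,d\mu \;+\; \frac{C}{\rho^{2}}\int_{\Sigma_{2\rho}(0)}|\AA|^2\,d\mu .
\end{equation*}
The step I expect to be the main obstacle is precisely the derivation of this closed inequality: the mean-curvature terms $|\AA|^2|\vec H|^2$ are not small, and one must use the Willmore equation itself to rewrite the $\vec H$-contributions so that, after integration by parts, they are multiplied only by the small quantity $\int_\Sigma|\AA|^2$ or by the decay-controlled far-field integral $\int_{\Sigma_{2\rho}(0)}|A|^2$.

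Granting the displayed estimate, I would let $\rho\to\infty$. The second term tends to $0$ for every $\rho$ because $\int_\Sigma|\AA|^2<\varepsilon_0<\infty$; the first term tends to $0$ along a sequence $\rho_j\to\infty$ supplied by the hypothesis $\liminf_{\rho\to\infty}\rho^{-4}\int_{\Sigma_\rho(0)}|A|^2=0$ (and it is automatic in the compact case, where $\Sigma_\rho(0)=\Sigma$ for large $\rho$). Passing to the limit gives $\int_\Sigma|\nabla\AA|^2\,d\mu=0$, so $\AA$ is parallel and $|\AA|$ is constant. To upgrade this to $\AA\equiv0$ I would argue by cases. In the compact case the Gauss--Bonnet identity $\int_\Sigma|\AA|^2=2\mathcal{W}(f)-4\pi\chi(\Sigma)$ together with $\int_\Sigma|\AA|^2<\varepsilon_0<8\pi$ and the Willmore bound $\mathcal{W}\ge4\pi$ forces $\chi(\Sigma)=2$, and a parallel nonzero trace-free symmetric $2$-tensor would produce two smooth orthogonal line fields on a topological sphere, contradicting $\chi=2$ via Poincar\'e--Hopf; hence $\AA\equiv0$. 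In the non-compact case a complete, properly immersed, non-compact surface has infinite area, so a parallel $\AA$ with $\int_\Sigma|\AA|^2<\infty$ must vanish.

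Assembling the pieces, $\AA\equiv0$ in either case, and the classification of complete totally umbilic surfaces identifies $f$ as an embedded plane or round sphere. As an alternative route for the non-compact case that makes the role of the growth hypothesis transparent, one may instead invert $f$ at a point of $\mathbb{R}^n$, use the conformal invariance of both $\mathcal{W}$ and $\int_\Sigma|\AA|^2$, invoke the growth hypothesis to remove the resulting point singularity at the image of infinity, and thereby reduce directly to the compact case. The remaining delicate points are the explicit dimensional threshold $\varepsilon_0(n)$ coming from the Sobolev constant and the bookkeeping of the mean-curvature terms in the absorption.
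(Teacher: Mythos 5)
This statement is not proved in the paper at all: it is quoted from Kuwert--Sch\"atzle \cite{KS01} (alternatively Bernard--Rivi\`ere \cite{BR}) purely as motivation, and the paper's own contribution (Section 3) consists in adapting that proof's strategy to the Lagrangian tensor $\check{A}$. Your outline --- a Simons-type identity combined with the Willmore equation, cutoff functions, Michael--Simon Sobolev absorption under the small-energy hypothesis, passage to the limit $\rho\to\infty$ via the growth hypothesis, then rigidity of totally umbilic surfaces --- is exactly the Kuwert--Sch\"atzle route, i.e.\ the same approach the paper itself mimics. However, two genuine gaps remain. The first is structural: the ``closed inequality'' on which everything hinges is posited rather than derived (you flag this yourself as the main obstacle), and, as displayed, its left-hand side retains only $\int|\nabla \text{\AA{}}|^2\phi^2\,d\mu$. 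In \cite{KS01}, and likewise in the paper's Lagrangian analogue (Lemma 3.1, whose left-hand side is $\int_\Sigma(|\nabla\check{A}|^2+|H|^2|\check{A}|^2)\gamma^2\,d\mu$), a mean-curvature coupling term of the form $\int|H|^2|\text{\AA{}}|^2\gamma^2\,d\mu$ survives on the left after absorption; this is not cosmetic, as it is precisely what rescues your endgame.

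The second gap is a step that is false as stated: in the non-compact case you deduce $\text{\AA{}}\equiv 0$ from $\nabla\text{\AA{}}\equiv 0$, $\int_\Sigma|\text{\AA{}}|^2<\infty$, and the claim that a complete, properly immersed, non-compact surface has infinite area. That claim fails in general: connect round spheres of radius $2^{-k}$ centered at $(k,0,0,0)$, $k\in\mathbb{N}$, by thin necks of area at most $4^{-k}$; the result is a properly embedded (hence complete), non-compact, smooth surface of finite total area. Its Willmore energy is infinite, but finiteness of $\int_\Sigma|H|^2$ is not among your hypotheses, so you cannot appeal to Simon's monotonicity formula to force quadratic area growth either; thus a parallel $\text{\AA{}}$ of small $L^2$-norm need not vanish by your argument. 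The repair is exactly the term you dropped: once the limit also yields $\int_\Sigma|H|^2|\text{\AA{}}|^2\,d\mu=0$, either $\text{\AA{}}\equiv 0$, or $|\text{\AA{}}|$ is a nonzero constant and $H\equiv 0$; in the latter case the surface is a complete non-compact \emph{minimal} surface, for which the monotonicity formula (clean, since $H=0$) gives at least quadratic area growth, so $\int_\Sigma|\text{\AA{}}|^2=\infty$, a contradiction, while closed minimal surfaces in $\mathbb{R}^n$ do not exist, covering the compact case --- where, for what it is worth, your Gauss--Bonnet plus Poincar\'e--Hopf argument is correct as it stands. With these two repairs your proposal becomes a faithful reconstruction of the cited proof.
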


The small energy condition above is natural in the variational sense since it can be geometrically interpreted as how different of an immersion from being the simplest geometric models such as planes and standard spheres. Our personal interest is on Lagrangian submanifolds which often play an important role in symplectic geometry where objects often have a natural presentation as Lagrangian manifolds. And they also 
arise in many problems of mechanics and physics. Models such as the Lagrangian planes, the Clifford torus and Whitney spheres are the simplest objects to study in Lagrangian geometry. Followed by Kuwert-Sch\"{a}tzle's idea, Luo-Wang proved a similar result under Lagrangian settings:

\begin{theo*}[Gap lemma for HW surfaces, \text{\cite[Th.~4.3]{LW15}}]
	Let $f:\Sigma\to\mathbb{C}^2$ be a properly immersed HW surface, then there exists $\epsilon_0(n)>0$ such that if the norm of the second fundamental form $\|A\|_{L^2}<\epsilon_0(n)$, it must be a Lagrangian plane.
\end{theo*}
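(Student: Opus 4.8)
The plan is to transplant the Kuwert--Sch\"atzle absorption scheme to the Hamiltonian--Willmore (HW) setting, using the Ricci-flat K\"ahler geometry of $\mathbb{C}^2$ and the rigidity of Lagrangian surfaces. The objective is to promote the smallness hypothesis $\|A\|_{L^2(\Sigma)}<\epsilon_0$ to the vanishing statement $A\equiv 0$; a properly immersed, complete, totally geodesic surface in $\mathbb{C}^2\cong\mathbb{R}^4$ is an affine plane, and the Lagrangian condition then forces it to be a Lagrangian plane.

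First I would record the HW Euler--Lagrange equation, a fourth-order elliptic equation of Willmore type which schematically reads
\[
\Delta^\perp H + Q(A^0)\ast H = 0,
\]
where $A^0$ is the trace-free part of $A$ and $Q(A^0)\ast H$ is a universal quadratic contraction of $A^0$ with $H$. The key structural input is that on a Lagrangian surface the cubic form $\langle A(\cdot,\cdot),J\,\cdot\rangle$ is totally symmetric. Consequently $H$ and $A$ are pointwise comparable, $|H|\le\sqrt{2}\,|A|$, and the Codazzi equations recover every component of $\nabla A$ from $\nabla^\perp H$ up to terms controlled by $|A|^2$. This reduces the problem to estimating $H$ and its first covariant derivative.

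The core of the argument is one integral estimate. I would fix a cutoff $\gamma=\gamma_\rho$ equal to $1$ on $\Sigma_\rho(0)$ and supported in $\Sigma_{2\rho}(0)$, test the Euler--Lagrange equation against $H\gamma^2$, and integrate by parts to reach
\[
\int_\Sigma |\nabla^\perp H|^2\gamma^2\,d\mu \;\le\; c\int_\Sigma |A^0|^2|H|^2\gamma^2\,d\mu + c\int_\Sigma |H|^2|\nabla\gamma|^2\,d\mu.
\]
Feeding in the Codazzi relation turns the left side into control of $\int|\nabla A|^2\gamma^2$ and the first term into $\int|A|^4\gamma^2$. To close the estimate I would invoke the Michael--Simon Sobolev inequality on $\Sigma$, which for $u\ge 0$ bounds $\|u\|_{L^2}^2\le c\,(\|\nabla u\|_{L^1}+\|uH\|_{L^1})^2$; applied to $u=|A|^2\gamma^2$ together with Cauchy--Schwarz this yields
\[
\int_\Sigma |A|^4\gamma^2\,d\mu \;\le\; c\,\|A\|_{L^2(\operatorname{supp}\gamma)}^2\int_\Sigma\big(|\nabla A|^2\gamma^2+|A|^2|\nabla\gamma|^2\big)\,d\mu.
\]
Since $\|A\|_{L^2}^2<\epsilon_0$, choosing $\epsilon_0$ so that $c\,\epsilon_0<\tfrac12$ lets me absorb the quartic and curvature terms into the left-hand side, leaving
\[
\int_\Sigma\big(|\nabla A|^2+|A|^4\big)\gamma^2\,d\mu \;\le\; c\int_\Sigma |A|^2|\nabla\gamma|^2\,d\mu.
\]

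Finally I would let $\rho\to\infty$. Properness makes each annulus $\Sigma_{2\rho}(0)\setminus\Sigma_\rho(0)$ compact, and choosing $\gamma$ with $|\nabla\gamma|\le c/\rho$ bounds the right-hand side by $c\,\rho^{-2}\|A\|_{L^2}^2\to 0$. Hence $\int_\Sigma(|\nabla A|^2+|A|^4)\,d\mu=0$, so $A\equiv 0$, and the surface is a Lagrangian plane. The main obstacle I anticipate is the algebra leading to the fundamental estimate: one must derive the HW equation explicitly, keep exact track of the Lagrangian Codazzi/Simons identities so that the full tensor $\nabla A$ (not merely $\nabla^\perp H$) is genuinely controlled, and pin down the Michael--Simon constant so that its product with $\epsilon_0$ stays strictly below one. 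Extracting the decay of the cutoff terms purely from properness, without assuming a priori decay of $A$, is the second delicate point.
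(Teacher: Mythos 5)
Your proposal cannot be checked against a proof in this paper, because the paper does not prove this statement: it is quoted from \cite{LW15} as motivation, and the paper's Section~3 runs the analogous Kuwert--Sch\"atzle scheme for its own equation $\nabla^*T=0$. Measured against that scheme (and against \cite{LW15}), your argument has a genuine gap right at the start: you use the wrong Euler--Lagrange equation. An HW surface is critical for the Willmore functional only under \emph{Hamiltonian} variations, i.e.\ normal fields of the form $J\nabla\phi$ with $\phi\in C^1_c(\Sigma)$; the resulting equation is the single scalar condition $\operatorname{div}(J\operatorname{W})=0$ (equivalently $\nabla^*(\operatorname{W}\lrcorner\omega)=0$), \emph{not} the pointwise Willmore equation $\operatorname{W}=\varDelta H+Q(\text{\AA{}})H=0$. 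Consequently the only admissible pairings are $\int_\Sigma\langle \operatorname{W},J\nabla\phi\rangle\,d\mu=0$, and your test field $H\gamma^2$ is not of this form: one would need $\gamma^2(H\lrcorner\omega)$ to be exact, whereas the Maslov form $H\lrcorner\omega$ is closed but in general not exact, and multiplication by $\gamma^2$ destroys even closedness. This weaker, divergence-form structure is exactly the difficulty that forces the present paper to work with the tensor $T$ rather than with $\operatorname{W}$ itself.

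Independently of this, the reduction you rely on --- ``$H$ and $A$ are pointwise comparable, and Codazzi recovers every component of $\nabla A$ from $\nabla^\perp H$ up to $|A|^2$-terms'' --- is false, and it is the step that would have to carry the whole proof. On a Lagrangian surface the symmetric cubic form $A_{ijk}$ has four independent components while $H$ has two, so $|A|\le c|H|$ fails; concretely, every special Lagrangian (minimal) surface in $\mathbb{C}^2$, such as the Harvey--Lawson Lagrangian catenoid, has $H\equiv 0$, satisfies the HW equation trivially, and yet has $A\not\equiv 0$ and $\nabla A\not\equiv 0$. For such surfaces every inequality in your chain reads $0\le 0$ and carries no information about $A$, so your scheme cannot conclude $A\equiv 0$ even on this subclass of HW surfaces; the theorem is still true there, but only via a minimal-surface gap/quantization argument (cf.\ \cite{A90}) that you never invoke. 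The correct route --- taken in \cite{LW15} and mirrored in this paper's Proposition~3.2 and Lemma~3.1 --- is a Simons/Bochner identity for the full trace-free curvature tensor (\text{\AA{}} or $\check{A}$) tested against \emph{itself}, with the variational equation entering only to control the $\nabla^2 H$ (respectively $\nabla T$) terms, followed by Michael--Simon absorption and a classification theorem. Your Michael--Simon absorption and the final cutoff limit $\rho\to\infty$ are standard and fine, but they sit on top of an estimate for $\int(|\nabla A|^2+|A|^4)\gamma^2\,d\mu$ that your argument does not actually produce.
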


According to their paper, it still remains open if there is a similar gap phenomenon for the Whitney sphere in the class of HW surfaces (see \cite{LW15} for the definition). To reformulate, we introduce a (0,2)-tensor $T:=\nabla(H\lrcorner\omega)-\frac{1}{2}\operatorname{div}JH\cdot g$ and consider the equation $\nabla^*T=0$ in this paper. From a geometric point of view, the tensor $T$ measures the deviation of the mean curvature vector field from a conformal field and one can easily check that Whitney spheres satisfy the equation. Therefore it is natural to ask if the Whitney sphere is unique under some small energy conditions. Instead of considering an energy condition on \AA{}, we introduce a Lagrangian trace-free second fundamental form $\check{A}$ (see definition in section 2) for its close relationship with Whitney spheres. The following is our main result:
\begin{theo}\label{Theorem 1.1}
	Assume $f:\Sigma\to\mathbb{C}^2$ is a properly immersed Lagrangian surface (compact or non-compact) such that $\nabla^*T=0$, given $\gamma\in C^1_c(\Sigma)$  a positive function that satisfies
	$|\nabla\gamma|\leq\frac{C_0}{R}$ for any $R>0$, there exists a constant $\epsilon_0>0$ such that if
	\begin{equation*}
	\int_{\{\gamma>0\}}|\check{A}|^2 d\mu\leq\epsilon_0,
	\end{equation*}
 we have
	\begin{equation*}
	\int _{\Sigma}(|\nabla\check{A}|^2+|H|^2|\check{A}|^2)\gamma^2d\mu\leq \frac{C}{R^2}\int_{\{\gamma>0\}}|A|^2 d\mu.
	\end{equation*}
\end{theo}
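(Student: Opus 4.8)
\section*{Proof proposal}

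The plan is to derive a Simons-type (Bochner) identity for the Lagrangian trace-free second fundamental form $\check{A}$ out of the constraint $\nabla^* T = 0$, and then run a cut-off integral estimate in the spirit of Kuwert--Sch\"atzle. First I would exploit the Lagrangian structure of $f$ --- the full symmetry of the cubic form $h_{ijk} = \langle A(e_i,e_j), Je_k\rangle$, the identity $H \lrcorner \omega = JH$, and the Codazzi equations for a surface in the flat space $\mathbb{C}^2$ --- to rewrite $T = \nabla(H\lrcorner\omega) - \frac{1}{2}\dive(JH)\, g$ purely in terms of covariant derivatives of $\check{A}$ and $H$. Imposing $\nabla^* T = 0$ then yields a second-order identity of the schematic form $\Delta \check{A} = \nabla^2(\text{const.}) + \check{A}*A*A$, where $*$ denotes metric contractions and $\check{A}*A*A$ collects the terms cubic in the second fundamental form. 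The careful bookkeeping here, separating the genuinely trace-free contributions from the $H$-terms, is where the Lagrangian hypotheses enter decisively.

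Second, I would test this identity against $\check{A}\gamma^2$ and integrate over $\Sigma$. Integrating the $\langle \Delta\check{A}, \check{A}\rangle\gamma^2$ term by parts produces the good term $\int |\nabla\check{A}|^2\gamma^2$, a cut-off error $\int\langle\nabla\check{A}, \check{A}\otimes\nabla\gamma\rangle\gamma$, and (from the cubic terms) the curvature quartics $\int|\check{A}|^4\gamma^2$ together with the mixed term $\int|H|^2|\check{A}|^2\gamma^2$; the latter is arranged by the identity so as to sit with the correct sign on the left-hand side, giving exactly the two quantities $\int(|\nabla\check{A}|^2 + |H|^2|\check{A}|^2)\gamma^2$ we wish to bound. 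The cut-off error is controlled by Young's inequality, $2\int\langle\nabla\check{A},\check{A}\otimes\nabla\gamma\rangle\gamma \le \tfrac12\int|\nabla\check{A}|^2\gamma^2 + 2\int|\check{A}|^2|\nabla\gamma|^2$, and the hypothesis $|\nabla\gamma|\le C_0/R$ together with $|\check{A}|\le C|A|$ turns the last integral into $\frac{C}{R^2}\int_{\{\gamma>0\}}|A|^2 \, d\mu$, which is precisely the right-hand side of the claimed estimate.

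Third, the quartic terms $\int|\check{A}|^4\gamma^2$ must be absorbed, and this is where the small-energy hypothesis is used. Applying the Michael--Simon Sobolev inequality to $u = |\check{A}|^2\gamma$ (or to $|\check{A}|\gamma$) and using Cauchy--Schwarz yields an interpolation estimate of the form
\begin{equation*}
\int_\Sigma |\check{A}|^4\gamma^2\, d\mu \le C\Big(\int_{\{\gamma>0\}}|\check{A}|^2\, d\mu\Big)\Big(\int_\Sigma(|\nabla\check{A}|^2 + |H|^2|\check{A}|^2)\gamma^2\, d\mu + \int_\Sigma|\check{A}|^2|\nabla\gamma|^2\, d\mu\Big).
\end{equation*}
Choosing $\epsilon_0$ small so that $C\int_{\{\gamma>0\}}|\check{A}|^2 \le C\epsilon_0 \le \tfrac14$, the first factor on the right is small and the corresponding piece is absorbed into the left-hand side, while the remaining $\int|\check{A}|^2|\nabla\gamma|^2$ is again bounded by $\frac{C}{R^2}\int_{\{\gamma>0\}}|A|^2$. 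Collecting terms and absorbing then gives the desired inequality.

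Finally, I expect the hardest part to be Step~1: deriving the clean Simons/Bochner identity from $\nabla^* T = 0$ and verifying that the mixed curvature term genuinely contributes $+|H|^2|\check{A}|^2$ with a usable sign. Unlike the Willmore case, $\check{A}$ is not the classical trace-free fundamental form, so its Simons identity carries extra lower-order Lagrangian terms (involving $JH$ and the Maslov form) that are not a priori controlled; the delicate point is to show that every such term is either a total divergence, absorbable by Young's inequality into $\int|\nabla\check{A}|^2\gamma^2$, or of the quartic type handled by Michael--Simon. Provided these terms close up, the absorption argument is routine.
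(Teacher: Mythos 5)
Your overall architecture is the same as the paper's: a Bochner identity for $\check{A}$, testing against $\gamma^2\check{A}$, Michael--Simon interpolation applied to $v=|\check{A}|^2\gamma$ to absorb the quartic term, and absorption under the small-energy hypothesis. Your Steps 2 and 3 reproduce the paper's Lemma 3.1 and Lemma 3.2 almost verbatim, and your expectation about the mixed term is correct: it comes from $3K|\check{A}|^2$ with $K=\tfrac{1}{8}|H|^2-\tfrac{1}{2}|\check{A}|^2$, so $\tfrac{3}{8}\int|H|^2|\check{A}|^2\gamma^2\,d\mu$ does land on the left with a usable sign. However, there is a genuine gap exactly at the point you defer: how the hypothesis $\nabla^*T=0$ actually enters. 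The Bochner identity (the paper's Proposition 3.2) contains the symmetrization of the \emph{full} derivative $\nabla T$, i.e.\ terms $T_{ij,k}$, not merely the divergence $T_{im,m}$; the constraint $\nabla^*T=0$ does not make these vanish, so your schematic identity $\Delta\check{A}=\nabla^2(\mathrm{const.})+\check{A}*A*A$ is not correct, and your Step 2 accounting omits the term $\int\langle\nabla T,\check{A}\rangle\gamma^2\,d\mu$ altogether. This term fits none of your three fallback mechanisms: integrating it by parts once, using $\nabla^*(\check{A}\lrcorner\omega)=-\tfrac{1}{2}T$, produces $\tfrac{1}{2}\int|T|^2\gamma^2\,d\mu$ on the bad side of the inequality, and since the only available pointwise bound is $|T|\leq c\,|\nabla\check{A}|$ with a fixed constant $c$ that is not small, Young's inequality cannot absorb it into $\int|\nabla\check{A}|^2\gamma^2\,d\mu$; it is neither a harmless total divergence nor of quartic type.

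The missing idea --- which is the crux of the paper and the reason $T$ is introduced at all --- is to use \emph{both} characterizations of $T$ in a double integration by parts. Since $T$ is trace free, $|T|^2=\langle T,\nabla(H\lrcorner\omega)-\tfrac{1}{2}\operatorname{div}JH\cdot g\rangle=\langle T,\nabla(H\lrcorner\omega)\rangle$, so a second integration by parts converts $\tfrac{1}{2}\int|T|^2\gamma^2\,d\mu$ into $\tfrac{1}{2}\int\langle\nabla^*T,H\lrcorner\omega\rangle\gamma^2\,d\mu$ plus a cut-off term $\int\langle T,(H\lrcorner\omega)\otimes\nabla\gamma\rangle\gamma\,d\mu$. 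Only now does the hypothesis $\nabla^*T=0$ act, killing the first integral; the cut-off term is then handled by $|T|\leq c|\nabla\check{A}|$, Young's inequality, and $|H|^2\leq\tfrac{4}{3}|A|^2$ (which follows from $|\check{A}|^2=|A|^2-\tfrac{3}{4}|H|^2\geq 0$), producing exactly the $\tfrac{C}{R^2}\int_{\{\gamma>0\}}|A|^2\,d\mu$ error. This mechanism is also what makes the theorem work without any a priori control on $H$, the very difficulty the paper singles out as distinguishing this setting from Kuwert--Sch\"atzle. Your closing caveat ``provided these terms close up'' is therefore not a routine verification left to the reader; it is the entire content of the proof, and as written your scheme does not close.
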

Combining the previous result with a classification theorem from Lagrangian geometry (see \cite{CU93} or \cite{CU01}), we obtain a gap theorem which answers the above question:
\begin{theo}\label{Theorem 1.2}
	Assume $f:\Sigma\to\mathbb{C}^2$ is a properly immersed Lagrangian surface (compact or non-compact) that satisfies $\nabla^*T=0$, and let ${\Sigma_{\varrho}(0)}=f^{-1}(B_{\varrho}(0))$, then there exists $\epsilon_0>0$ such that if
	\begin{equation*}
	\int_{\Sigma}|\check{A}|^2 d\mu\leq\epsilon_0\quad and\quad \liminf_{\varrho\to\infty}\frac{1}{\varrho^2}\int_{\Sigma_{\varrho}(0)}|A|^2 d\mu=0,
	\end{equation*}
 then $f$ is either a Lagrangian plane in $\mathbb{C}^2$ or a Whitney immersion.
\end{theo}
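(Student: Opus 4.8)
The plan is to run a Kuwert--Sch\"{a}tzle-type blow-down of the gap estimate in Theorem \ref{Theorem 1.1}, inserting a family of radial cut-offs and sending the scale to infinity. First I would fix a profile $\eta\in C^1([0,\infty))$ that is non-increasing with $\eta\equiv1$ on $[0,1]$ and $\eta\equiv0$ on $[2,\infty)$, set $r:=|f|$ and $\gamma_R:=\eta(r/R)$. Since $f$ is an isometric immersion we have $|\nabla r|\le1$, so $|\nabla\gamma_R|\le C_0/R$ with $C_0:=\|\eta'\|_{\infty}$, and properness of $f$ makes $\gamma_R$ an admissible element of $C^1_c(\Sigma)$ with $\{\gamma_R>0\}=\Sigma_{2R}(0)$. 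Because $\int_\Sigma|\check A|^2\,d\mu\le\epsilon_0$, the hypothesis $\int_{\{\gamma_R>0\}}|\check A|^2\,d\mu\le\epsilon_0$ holds for every $R$, so Theorem \ref{Theorem 1.1} applies at each scale and gives
\begin{equation*}
\int_\Sigma\bigl(|\nabla\check A|^2+|H|^2|\check A|^2\bigr)\gamma_R^2\,d\mu\le\frac{C}{R^2}\int_{\Sigma_{2R}(0)}|A|^2\,d\mu=\frac{4C}{(2R)^2}\int_{\Sigma_{2R}(0)}|A|^2\,d\mu.
\end{equation*}

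Next I would feed this the $\liminf$ hypothesis. Picking $\varrho_j\to\infty$ (which we may take increasing) along which $\varrho_j^{-2}\int_{\Sigma_{\varrho_j}(0)}|A|^2\,d\mu\to0$ and putting $R_j:=\varrho_j/2$, the right-hand side tends to $0$. For each fixed $x\in\Sigma$ one has $\gamma_{R_j}(x)\uparrow1$, so monotone convergence on the left yields
\begin{equation*}
\int_\Sigma\bigl(|\nabla\check A|^2+|H|^2|\check A|^2\bigr)\,d\mu=0,
\end{equation*}
and therefore $\nabla\check A\equiv0$ and $|H|\,|\check A|\equiv0$ on $\Sigma$.

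The only genuinely delicate point, which I expect to be the main obstacle, is upgrading $\nabla\check A\equiv0$ to $\check A\equiv0$. As $\check A$ is now parallel, $|\check A|\equiv c$ is constant; suppose $c>0$. Then $|H|\equiv0$, so $f$ is a proper minimal (Lagrangian) immersion into $\mathbb{R}^4$. If $\Sigma$ is compact this is impossible by the convex-hull property of minimal submanifolds (equivalently, harmonicity of the coordinate functions forbids a closed minimal surface in $\mathbb{R}^4$); if $\Sigma$ is non-compact, the monotonicity formula forces $|\Sigma|=\infty$, so $\int_\Sigma|\check A|^2\,d\mu=c^2|\Sigma|=\infty$, contradicting $\int_\Sigma|\check A|^2\,d\mu\le\epsilon_0$. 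In either case $c=0$, hence $\check A\equiv0$ on all of $\Sigma$.

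Finally I would invoke the Castro--Urbano classification of Lagrangian surfaces with vanishing Lagrangian trace-free second fundamental form (\cite{CU93} or \cite{CU01}): any such surface is either totally geodesic, i.e.\ an open part of a Lagrangian plane, or a Whitney immersion. This gives exactly the stated dichotomy. The substance of the argument lies in the first two steps --- verifying that the cut-offs are admissible for Theorem \ref{Theorem 1.1} at every scale and channeling $\int_{\Sigma_{2R}(0)}|A|^2$ through the $\liminf$ condition --- after which the conclusion is a short rigidity argument together with the cited classification.
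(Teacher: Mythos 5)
Your proposal is correct and follows the same blow-down strategy as the paper: insert radial cut-offs into Theorem \ref{Theorem 1.1}, drive the right-hand side to zero along the sequence furnished by the $\liminf$ hypothesis, and conclude via the Castro--Urbano classification. The notable difference is that you are more careful than the paper at the one genuinely delicate step. The paper's proof simply asserts that letting $R\to\infty$ in the estimate yields $\check{A}=0$ ``by the arbitrariness of the cut-off function,'' but what the limit actually gives is only $\int_{\Sigma}(|\nabla\check{A}|^2+|H|^2|\check{A}|^2)\,d\mu=0$, i.e.\ $\nabla\check{A}\equiv0$ and $|H|\,|\check{A}|\equiv0$; upgrading a parallel $\check{A}$ to a vanishing $\check{A}$ needs an extra argument, which the paper omits. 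You supply exactly this: parallelism gives $|\check{A}|\equiv c$ constant, and if $c>0$ then $H\equiv0$, so $f$ is a proper minimal immersion, ruled out in the compact case by the maximum principle (convex-hull property) and in the non-compact case by the monotonicity formula, which forces $|\Sigma|=\infty$ and hence $\int_{\Sigma}|\check{A}|^2\,d\mu=c^2|\Sigma|=\infty$, contradicting the energy hypothesis. (In the non-compact case one could alternatively note that $H\equiv 0$ gives $|A|^2=|\check{A}|^2=c^2$, so monotonicity yields $\varrho^{-2}\int_{\Sigma_{\varrho}(0)}|A|^2\,d\mu\geq \pi c^2>0$, contradicting the $\liminf$ condition directly.) The remaining ingredients of your write-up --- admissibility of the cut-offs under properness, the monotone-convergence passage to the limit, and the appeal to \cite{CU93,RU98} together with properness to promote ``open piece of a Whitney sphere'' to a full Whitney immersion --- are all in order, so your argument both matches the paper's route and closes a real gap in its two-line proof.
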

    Our method to prove these therorems is  establishing a Bochner type identity for the Lagrangian trace free curvature as Kuwert-Sch\"{a}tzle. But the difficulty here is that our condition on $\check{A}$ does not imply a control on $H$ as in their case. Therefore we write the Bochner identity in terms of the tensor $T$ so that we can make good use of its relationship with $\check{A}$.
    
    We organize this paper as follows: in Section 2 we introduce some elementary notions on Lagrangian submanifolds as well as the Willmore functional. Section 3 is devoted to a curvature estimate for Lagrangian surfaces which is essential for us to get the main gap theorem. In section 4, we will give a connection between our problem and the case of studying gap phenomena for the HW surfaces.

	\section{Preliminary on the Lagrangian geometry}
	
	In this section, let's recall some elementary notions in the Lagrangian geometry. Let $\mathbb{C}^2=\mathbb{R}^4$ be the 2-dimensional complex plane with the standard metric $ds^2=dx_i^2+dy_i^2$ (also denoted as $\langle\quad,\quad\rangle$) and $\omega=dx_i\wedge dy_i$ be the standard symplectic structure associated with it. Let $J$ be the standard complex structure of $\mathbb{C}^2$ such that $J^2=-id_{\mathbb{C}^2}$.  These  structures above  satisfy the relationship: $\langle V,W\rangle=\omega(V,JW)$ for any vectors $V$ and $W$. Here we order the coordinates as $(x_1,y_1,x_2,y_2)$. We denote the connection on $\mathbb{C}^2$ induced by the Euclidean metric as $D$. Now for an immersion $f:\Sigma\to\mathbb{C}^2$, we define the second fundamental form $A:=(D^2 f)^\bot$, i.e. the normal part of the second order covariant derivative of $f$, the mean curvature $H=trA$, and the trace-free second fundamental form $\text{\AA{}}:=A-\frac{1}{2}g\otimes H$ as usual. 
	
	\begin{definition}
		Let $\Sigma$ be a surface in $\mathbb{C}^2$, with tangent and normal bundles, $T\Sigma$ and $N\Sigma$, respectively. Then $\Sigma$ is Lagrangian if and only if one of the following equivalent conditions holds:
		\begin{itemize}
			\item[(1)] $\omega$ restricted to $\Sigma$ is zero,
			\item[(2)] $JT\Sigma=N\Sigma$, where $J$ is the standard complex structure on $\mathbb{C}^2$,
		\end{itemize}
	\end{definition}
   We usually treat $f(\Sigma)$ and $\Sigma$ as the same if there is no confusion. Hence the first condition has an alternative version:

	\begin{definition}
		An immersion $f$ from a surface $\Sigma$ into $\mathbb{C}^2$ is called a Lagrangian immersion if $f^*\omega =0$.
	\end{definition}
    The second condition allows us to choose frames properly which will be helpful when we are doing geometric calculations.
	
	The Lagrangian subspaces or planes are the simplest Lagrangian surfaces in $\mathbb{C}^2$, and there are also some well-known but non-trivial examples such as the Clifford torus and Whitney spheres.
	\begin{example}[Lagrangian planes in $\mathbb{C}^2$]
		All 2 dimensional subspaces of $\mathbb{R}^4$ whose  restriction of symplectic form $\omega$ to this subspace is identically equal to zero is called Lagrangian planes. 
	\end{example}

	\begin{example}[Whitney immersions in $\mathbb{C}^2$]
		\begin{equation}
		\begin{split}
		\Phi: \qquad\qquad\mathbb{S}^2&\longrightarrow \mathbb{C}^2
		\\ (x_1,x_2,x_3) &\longmapsto\frac{r}{1+x^2_3}(x_1,x_1 x_3,x_2,x_2 x_3)+\overrightarrow{C}
		\end{split}
		\end{equation}
		is a family of Lagrangian immersion. Here we embed $\mathbb{S}^2$ into $\mathbb{R}^3$ with center at the origin to get its local coordinates at first. The image of $\Phi$ in $\mathbb{C}^2$ is called a Whitney sphere and denoted as $\mathbb{S}_W$, and the constants $r$ and $\overrightarrow{C}$ will be referred as the radius and the center respectively.
	\end{example}
	Topologically, it is well-known that there is no embedded sphere in $\mathbb{C}^2$ as a Lagrangian submanifold. Whitney spheres have possibly the simplest behaviour in this case because they only have one double point.
	
	Castro-Urbano \cite{CU93} (or Ros-Urbano \cite{RU98} for higher dimensional case) proved the following famous classification theorem:
	\begin{theo*}[\text{\cite[Th.~2]{RU98}}~]
		Let $\Psi: M\to\mathbb{C}^n$ be a Lagrangian immersion of an n-dimensional submanifold M, then
		\begin{equation*}
		A(v,w)=\frac{1}{4}\{\langle v,w\rangle H+\langle Jv, H\rangle Jw+\langle Jw, H\rangle Jv\}
		\end{equation*}
		holds for any vectors v and w tangent to M if and only if $\Psi(M)$ is either an open set of the Whitney sphere or is totally geodesic.
	\end{theo*}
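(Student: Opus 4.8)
The plan is to prove both implications, with the forward (``if'') direction being a direct computation and the converse being the substantive part. For the ``if'' direction: if $\Psi(M)$ is totally geodesic then $A\equiv 0$, hence $H\equiv 0$, and the asserted identity reduces to $0=0$; if $\Psi(M)$ is an open piece of the Whitney sphere, one computes the induced metric, $A$ and $H$ directly from the parametrization $\Phi$ and verifies the identity by hand (routine). For the converse I would first reformulate the algebraic condition. Since $\Psi$ is Lagrangian and the ambient is flat K\"ahler, the cubic form $C(X,Y,Z):=\langle A(X,Y),JZ\rangle$ is totally symmetric; using $\langle JV,JW\rangle=\langle V,W\rangle$, the stated form of $A$ is equivalent to $C(X,Y,Z)=\tfrac14\big(\langle X,Y\rangle\,\alpha(Z)+\langle Y,Z\rangle\,\alpha(X)+\langle Z,X\rangle\,\alpha(Y)\big)$ with the one-form $\alpha(\cdot):=\langle H,J\cdot\rangle$, i.e. the trace-free part of $C$ vanishes. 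This is an $H$-umbilicity condition carrying a specific ``Whitney ratio.''

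Next I would set up an adapted frame. On the open set $U:=\{H\neq 0\}$ choose the unit tangent field $e_1:=-JH/|H|$ (so $Je_1=H/|H|$) and complete it to an orthonormal tangent frame, so that $\{Je_i\}$ frames the normal bundle. Writing $\lambda:=|H|$, the reformulated condition gives, in the relevant dimension two, the explicit components $A(e_1,e_1)=\tfrac34\lambda\,Je_1$, $A(e_1,e_2)=\tfrac14\lambda\,Je_2$ and $A(e_2,e_2)=\tfrac14\lambda\,Je_1$, which is exactly the $H$-umbilical form with ratio $3:1$ between the two principal parts. On the complement $M\setminus U$ the same condition forces $A=0$.

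I would then integrate the structure equations. Feeding the components above into the Codazzi equation of the flat ambient (equivalently, total symmetry of $\nabla C$) yields a first-order system whose combination forces $\langle\nabla_{e_1}e_1,e_2\rangle=0$ (so the $e_1$-curves are geodesics of $M$), $e_2(\lambda)=0$ (so $\lambda$ depends on the $e_1$-parameter only), and $e_1(\lambda)=\langle\nabla_{e_2}e_1,e_2\rangle\,\lambda$; the Gauss equation then gives $K=\tfrac18\lambda^2$. Along an $e_1$-geodesic $\gamma$ parametrized by arclength one computes $\gamma''=D_{e_1}e_1=\tfrac34\lambda\,J\gamma'$ and $(J\gamma')'=-\tfrac34\lambda\,\gamma'$, so $\gamma$ is a planar curve of curvature $\tfrac34\lambda$. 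Combining this with the Gauss relation produces a second-order ODE for $\lambda$ whose solution, together with the reconstruction of $\Psi$ from the frame, is forced to coincide with the Whitney parametrization $\Phi$ up to a rigid motion and a scaling; hence $\Psi(U)$ is an open piece of a Whitney sphere.

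Finally, because $\Psi$ solves a real-analytic system, it is real-analytic, so on a connected $M$ either $H\equiv 0$ (and $M$ is totally geodesic, an open piece of a Lagrangian plane) or $U=\{H\neq 0\}$ is dense and the previous step identifies $\Psi(M)$ with an open piece of a single Whitney sphere by unique continuation. The main obstacle is the integration step: reducing the Codazzi--Gauss system correctly, solving the resulting ODE for $\lambda$, and proving the rigidity that the reconstructed immersion is exactly the Whitney sphere rather than a more general $H$-umbilical Lagrangian surface; a secondary subtlety is the interface between $\{H=0\}$ and $\{H\neq 0\}$, which the analyticity argument resolves.
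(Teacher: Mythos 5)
First, a framing remark: the paper does not prove this statement at all --- it is quoted as an external classification theorem from Castro--Urbano \cite{CU93} and Ros--Urbano \cite{RU98} --- so your proposal can only be measured against the literature, not against any argument in this paper. Within that comparison, your reduction is sound as far as it goes and follows essentially the known moving-frame route (Ros--Urbano's proof, and Chen's classification of Lagrangian $H$-umbilical submanifolds with ratio $3{:}1$): the total symmetry of $C(X,Y,Z)=\langle A(X,Y),JZ\rangle$, the components $A(e_1,e_1)=\frac{3}{4}\lambda Je_1$, $A(e_1,e_2)=\frac{1}{4}\lambda Je_2$, $A(e_2,e_2)=\frac{1}{4}\lambda Je_1$ in the frame $e_1=-JH/|H|$, and the Codazzi consequences all check out. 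Indeed, writing $\alpha_i=\langle\nabla_{e_i}e_1,e_2\rangle$, two independent symmetrizations of $\nabla C$ give $e_2(\lambda)=3\alpha_1\lambda$ and $e_2(\lambda)=\frac{1}{3}\alpha_1\lambda$, forcing $\alpha_1=0$ and $e_2(\lambda)=0$ on $\{H\neq 0\}$, and then $e_1(\lambda)=\alpha_2\lambda$ and $K=\frac{1}{8}\lambda^2$, exactly as you claim (the last is consistent with the paper's $K=\frac{|H|^2}{8}-\frac{|\check A|^2}{2}$ with $\check A=0$).

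The genuine gap is the step you yourself flag as the main obstacle: the assertion that the resulting system ``is forced to coincide with the Whitney parametrization up to rigid motion and scaling'' is the entire content of the theorem, and nothing in your outline carries it out. You would need to (i) actually derive and solve the ODE for $\lambda$ (along the arclength $t$ of the $e_1$-geodesics one gets $K=-\lambda''/\lambda$, hence $\lambda''+\frac{1}{8}\lambda^3=0$); (ii) reconstruct the immersion from the frame --- a genuine integration of the full structure equations, typically done by exhibiting the surface as an $\mathrm{SO}(2)$-equivariant (twisted-product) immersion over a planar Legendre-type curve; and (iii) prove rigidity, i.e.\ rule out other $H$-umbilical solutions with the same local data. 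There are also secondary soft spots: the coefficient $\frac{1}{4}$ is the correct normalization only for $n=2$ (tracing the identity forces $\frac{1}{n+2}$ in general, so your frame analysis silently restricts to the surface case although the statement is $n$-dimensional); the ``routine'' verification that the Whitney sphere satisfies the identity is not actually performed; and the appeal to real-analyticity needs justification, since the hypothesis is a pointwise algebraic constraint on $A$ rather than an obviously elliptic system for $\Psi$ --- the standard treatments instead argue directly on connected components of $\{H\neq 0\}$ and use the ODE for $\lambda$ to control behavior at the interface with $\{H=0\}$.
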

This reminds us of a classical theorem in $\mathbb{R}^n$ which states that if $\text{\AA{}}=0$, the immersion is either a plane or a standard sphere. Catro-Urbano's result is much more complicated to prove than this classical theorem. Motivated by this, we can define a similar quantity for Lagrangian surfaces:
	\begin{equation}
	\check{A}(v,w):=A(v,w)-\frac{1}{4}\{\langle v,w\rangle H+\langle Jv, H\rangle Jw+\langle Jw, H\rangle Jv\} \label{check A}.
	\end{equation}
	 We call $\check{A}$ the Lagrangian trace-free second fundamental form and an immersion is Lagrangian umbilical if it satisfies $\check{A}=0$.

	\section{Estimates for Lagrangian surfaces with locally small $L^2$-norm of $\check{A}$}

	\subsection{Preparations}
	Let $\{e_i\}$ be a local orthonormal frame for $\Sigma$ and  we denote $h_i=\langle H,Je_i\rangle=H\lrcorner\omega(e_i)$ and $A_{ijk}=\langle A(e_i.e_j), Je_k \rangle=A\lrcorner\omega(e_i,e_j,e_k)$ in local orthonormal coordinates. We would like to point out that $A$ is fully symmetric:
	\begin{equation}
		A_{ijk}=\langle D_{e_i}e_j,Je_k\rangle=-\langle e_j,JD_{e_i}e_k\rangle=\langle Je_j,D_{e_i}e_k\rangle=A_{ikj},\label{fully symmetric of A}
	\end{equation}
    where $D$ is the connection of the ambient space $\mathbb{C}^2$. Hence it doesn't matter which two of its three indices are contracted. We denote $g$ as the induced metric on $\Sigma$ by $f$,  $\nabla$ as its connection and $d\mu$ as the induced area form. For any tensor fields $S\in\Gamma(\underbrace{T\Sigma\otimes\cdots\otimes T\Sigma}_{r}\otimes N\Sigma)$ on $\Sigma$, 
    we define its covirant derivative as $\nabla ^{\bot} S=(D_X S)^\bot$ and its adjoint covariant derivative as $\nabla ^{\bot *}S=- e_i\lrcorner\nabla^{\bot}_{e_i}S$.  We can verify that they have the relationship
    \begin{equation*}
    \int_{\Sigma}\langle\nabla^\bot S,T\rangle d\mu=\int_{\Sigma}\langle S,\nabla^{^\bot*} T\rangle d\mu
    \end{equation*}
    for any  tensor fields $S\in \Gamma(\underbrace{T\Sigma\otimes\cdots\otimes T\Sigma}_{r}\otimes N\Sigma)$ and $T\in\Gamma(\underbrace{T\Sigma\otimes\cdots\otimes T\Sigma}_{r+1}\otimes N\Sigma)$.  In the following, we will omit the superscript of $\nabla^\bot$ if there is no confusion and hence the rough Laplace operator on the normal bundle can be written as $\varDelta S=-\nabla^* \nabla S$. The fundamental curvature functions of submanifold geometry can be expressed as
    \begin{align}
    &K=\frac{1}{2}(|H|^2-|A|^2)=\frac{1}{4}|H|^2-\frac{1}{2}|\text{\AA{}}|^2,\label{Gauss equ.}\\
    &(\nabla_X A)(Y,Z)=(\nabla_Y A)(X,Z), \quad(\nabla_X H)(Y,Z)=(\nabla_Y H)(X,Z),\label{Codazzi equ.}\\
    &R^{\bot}(X,Y)\phi=A(e_i,X)\langle A(e_i,Y),\phi\rangle-A(e_i,Y)\langle A(e_i,X),\phi\rangle.\label{Ricci equ.}
    \end{align}
    for any tangential vector fields $X$, $Y$ and normal vector fields $\phi$.
    
The following  proposition links those geometric quantities together:
	\begin{prop}
		Assume $f:\Sigma\to\mathbb{C}^2$ is a immersed Lagrangian surface (compact or non-compact), then its Lagrangian second fundamental form $\check{A}$ satisfies
		\begin{align}
		|\check{A}|^2&=|A|^2-\frac{3}{4}|H|^2 \label{(2.1)},\\
		-2\nabla^*(\check{A}\lrcorner\omega)&=\nabla(H\lrcorner\omega)-\frac{1}{2}\operatorname{div}JH\cdot g.
		\end{align}
	\end{prop}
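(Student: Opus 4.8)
The plan is to verify both identities by passing to a local orthonormal frame $\{e_i\}$ adapted to the Lagrangian structure, writing everything in the scalar components $h_i=\langle H,Je_i\rangle$ and $A_{ijk}=\langle A(e_i,e_j),Je_k\rangle$, and then exploiting two structural facts: the full symmetry of $A_{ijk}$ recorded in \eqref{fully symmetric of A}, and the Codazzi equation \eqref{Codazzi equ.}. Combined, these force the derivative tensor $\nabla_i A_{jkl}$ to be totally symmetric in all four indices, since the transpositions $(ij)$ (Codazzi), $(jk)$ and $(kl)$ (symmetry of $A$) generate the full symmetric group.

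For the first identity I would write $\check{A}=A-B$ with $B(v,w)=\frac14\{\langle v,w\rangle H+\langle Jv,H\rangle Jw+\langle Jw,H\rangle Jv\}$ and read off its components against the orthonormal normal frame $\{Je_k\}$, namely
\[
B_{ijk}=\tfrac14(\delta_{ij}h_k+\delta_{jk}h_i+\delta_{ik}h_j).
\]
Expanding $|\check{A}|^2=|A|^2-2\langle A,B\rangle+|B|^2$, the cross term and the quadratic term are evaluated by contracting the Kronecker deltas and using the trace relation $h_k=\sum_i A_{iik}$ together with the full symmetry of $A$: each of the three pieces of $\langle A,B\rangle$ collapses to $\frac14|H|^2$, giving $\langle A,B\rangle=\frac34|H|^2$, and the six pieces of $|B|^2$ likewise collapse to $|B|^2=\frac34|H|^2$. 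Substituting yields $|\check{A}|^2=|A|^2-\frac32|H|^2+\frac34|H|^2=|A|^2-\frac34|H|^2$. This part is purely algebraic and I expect nothing beyond careful bookkeeping.

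For the second identity both sides are symmetric $(0,2)$-tensors, and I would compare them in components at the centre of a normal coordinate system, where the intertwining relation $\nabla^{\bot}_{e_i}(Je_k)=J\nabla_{e_i}e_k$ (valid because $DJ=0$ and $\Sigma$ is Lagrangian) allows me to differentiate the scalar components freely, so that $\nabla_i\check{A}_{jkl}=\langle(\nabla^{\bot}_{e_i}\check{A})(e_j,e_k),Je_l\rangle$ and $\nabla^*$ acts as the ordinary divergence $(\nabla^* S)_{jk}=-\sum_i\nabla_i S_{ijk}$. Using $\check{A}_{ijk}=A_{ijk}-B_{ijk}$ and the contraction $\sum_i\nabla_i A_{ijk}=\nabla_j h_k$ — which is exactly the total symmetry of $\nabla A$ applied to the trace, and which incidentally shows the mean curvature form $H\lrcorner\omega$ is closed — one computes the $A$-part and the three $B$-parts separately; the $\delta_{jk}$-piece produces the trace $\sum_i\nabla_i h_i$, which is identified with $\pm\operatorname{div}(JH)$ through $(JH)_i=-h_i$. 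Collecting terms gives $-2\nabla^*(\check{A}\lrcorner\omega)_{jk}=\nabla_j h_k-\frac12\operatorname{div}(JH)\,\delta_{jk}$, which is the claimed tensor equation.

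The \emph{main obstacle} is not conceptual but consists in keeping the Lagrangian sign conventions consistent throughout: one must fix once and for all the signs in $\langle V,W\rangle=\omega(V,JW)$, in the component identification $(\,\cdot\,)\lrcorner\omega$, in the adjoint $\nabla^*$, and in $\operatorname{div}$, and then check that the trace term assembles with precisely the coefficient $-\frac12$ and the stated sign. The enabling step throughout is the total symmetry of $\nabla_i A_{jkl}$, since it is what permits the Codazzi-contracted term $\sum_i\nabla_i A_{ijk}$ to be rewritten as the Hessian-type term $\nabla_j h_k$ and simultaneously guarantees that the right-hand side is symmetric, matching the manifest $j\leftrightarrow k$ symmetry of $\nabla^*(\check{A}\lrcorner\omega)$ inherited from the full symmetry of $\check{A}$.
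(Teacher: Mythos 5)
Your proposal is correct and follows essentially the same route as the paper: both identities are verified in an adapted orthonormal frame by expanding $\check{A}_{ijk}=A_{ijk}-\tfrac14(\delta_{ij}h_k+\delta_{ik}h_j+\delta_{jk}h_i)$, with the first identity being the same algebraic contraction of Kronecker deltas and the second resting on exactly the Codazzi-plus-full-symmetry fact $A_{ijl,l}=A_{ill,j}=h_{i,j}$ (your ``total symmetry of $\nabla A$'') together with $h_{i,j}=h_{j,i}$. Your explicit justification of the intertwining relation $\nabla^{\bot}_{e_i}(Je_k)=J\nabla_{e_i}e_k$ and your flagging of the sign conventions for $\lrcorner\,\omega$ and $\operatorname{div}JH$ are slightly more careful than the paper's exposition, but the substance is identical.
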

	
	\begin{proof}
		In local orthonormal coordinates, we denote $\check{A}_{ijk}=\langle \check{A}(e_i.e_j), Je_k \rangle=\check{A}\lrcorner\omega(e_i,e_j,e_k)$. We use either  property (\ref{fully symmetric of A}) or  definition (\ref{check A}) to see that $\check{A}_{ijk}$ is fully symmetric. Hence we have
		\begin{align*}
		|\check{A}|^2&=\check{A}_{ijk}\check{A}^{ijk}\\
		&=[A_{ijk}-\frac{1}{4}(\delta_{ij} h_k+\delta_{ik} h_j+\delta_{jk} h_i)]^2\\
		&=A_{ijk} A^{ijk}-\frac{3}{4}h_i h^i.
		\end{align*}
		For the second identity, we use Codazzi equation (\ref{Codazzi equ.}) to get
		\begin{align*}
		-\nabla^*(\check{A}\lrcorner\omega)(e_i,e_j)&=\check{A}_{ijl,l}\\
		&=A_{ijl,l}-\frac{1}{4}(\delta_{ij}\operatorname{div}JH+h_{i,j}+h_{j,i})\\
		&=A_{ill,j}-\frac{1}{4}(\delta_{ij}\operatorname{div}JH+2h_{i,j})\\
		&=h_{i,j}-\frac{1}{4}(\delta_{ij}\operatorname{div}JH+2h_{i,j}).
		\end{align*}
	\end{proof}
    
    \begin{definition}
    	We define a (0,2)-tensor
    	\begin{equation*}
    		T:=-2\nabla^*(\check{A}\lrcorner\omega)=\nabla(H\lrcorner\omega)-\frac{1}{2}\operatorname{div}JH\cdot g,
    	\end{equation*}
    	or in local orthonormal basis:
    	\begin{equation*}
    		T_{ij}:=-2\check{A}_{ijl,l}=h_{i,j}-\frac{1}{2}h_{l,l}g_{ij},
    	\end{equation*}
    	where we have used Einstein's summation convention.
    \end{definition}
    One can see that $T$ is symmetric and actually the trace-free part of $\nabla(H\lrcorner\omega)$. To proceed we need a Bochner type identity which allows us to work globally. To make it clearer, instead of presenting it with coordinate free form straightly, we will introduce its local form at first and switch to its global form later in subsection 3.2.

	\begin{prop}[Bochner identity]
		If $f:\Sigma\to\mathbb{C}^2$ is a properly immersed Lagrangian surface, in local coordinates we have
		\begin{multline}\label{Laplacian of check A}
		\check{A}_{ijk,mm}=3K\check{A}_{ijk}\\
		+\frac{T_{ij,k}-\frac{1}{2}\delta_{ij}T_{km,m}}{3}+\frac{T_{jk,i}-\frac{1}{2}\delta_{jk}T_{im,m}}{3}+\frac{T_{ik,j}-\frac{1}{2}\delta_{ik}T_{jm,m}}{3}. 
		\end{multline}
	\end{prop}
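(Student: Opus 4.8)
The plan is to establish this as a Simons--Bochner type identity, working at a fixed point $p\in\Sigma$ in a geodesic orthonormal frame so that $\nabla_{e_i}e_j=0$ at $p$. Before computing anything I would record three structural facts. First, since $\mathbb{C}^2$ is flat and $J$ is parallel, the Codazzi equation (\ref{Codazzi equ.}) together with the full symmetry (\ref{fully symmetric of A}) of $A_{ijk}$ upgrades to full symmetry of $\nabla A$: one checks $A_{ijk,m}=A_{mjk,i}=A_{ijm,k}$, so $A_{ijk,m}$ is totally symmetric in all four indices, and contracting gives the trace identity $A_{mjk,m}=h_{j,k}$. Second, because the mean curvature form $H\lrcorner\omega$ is closed in the Calabi--Yau ambient $\mathbb{C}^2$, one has $h_{i,j}=h_{j,i}$, so that $T_{ij}=h_{i,j}-\tfrac12 h_{l,l}g_{ij}$ is indeed symmetric and trace-free. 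Third, the parallelism of $J$ identifies the normal connection with the Levi-Civita connection under $Je_k\leftrightarrow e_k$, so $A_{ijk}$ may be treated as an honest totally symmetric $(0,3)$-tensor on $\Sigma$ whose curvature commutators involve only the intrinsic Riemann tensor $R_{abcd}=K(g_{ac}g_{bd}-g_{ad}g_{bc})$ (equivalently, the normal curvature of (\ref{Ricci equ.}) is absorbed into $K$).

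With these in hand I would compute the rough Laplacian of $A$. Applying Codazzi to the inner derivative gives $A_{ijk,mm}=\nabla_m\nabla_i A_{mjk}$, and commuting the two derivatives yields
\begin{equation*}
A_{ijk,mm}=\nabla_i\nabla_m A_{mjk}+[\nabla_m,\nabla_i]A_{mjk}=h_{j,ki}+(\text{curvature terms}).
\end{equation*}
Evaluating the commutator with $R_{abcd}=K(g_{ac}g_{bd}-g_{ad}g_{bc})$ and the traces $\sum_m A_{mmk}=h_k$, the curvature terms collapse to multiples of $K A_{ijk}$ and of $K\delta_{\cdot\cdot}h_{\cdot}$. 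Separately, I would compute $\Delta h_i=h_{i,mm}$ by the same commutation, now for the closed $1$-form $h$: using $h_{i,m}=h_{m,i}$ gives $h_{i,mm}=(h_{l,l})_{,i}-K h_i$, from which the divergence of $T$ is read off as $T_{km,m}=\tfrac12(h_{l,l})_{,k}-K h_k$.

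The final step is to substitute these into
\begin{equation*}
\check{A}_{ijk,mm}=A_{ijk,mm}-\tfrac14\big(\delta_{ij}h_{k,mm}+\delta_{ik}h_{j,mm}+\delta_{jk}h_{i,mm}\big)
\end{equation*}
and reorganize. I expect the second-derivative-of-$h$ contributions to assemble, once the symmetry of the left-hand side in $i,j,k$ is enforced, into the trace-free cyclic combination $\tfrac13\sum_{\mathrm{cyc}}\big(T_{ij,k}-\tfrac12\delta_{ij}T_{km,m}\big)$, while the curvature contributions combine into $3K\check{A}_{ijk}$. As a consistency check one can trace over $j,k$: since $\check{A}$ and $T$ are trace-free, both sides vanish, and this is precisely what pins down the trace-correction coefficients $-\tfrac12\delta_{ij}T_{km,m}$.

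The main obstacle will be the index bookkeeping in this last reorganization. The raw output $h_{j,ki}$ of the Simons step is not symmetric in $i,j,k$, and it is only after adding the curvature terms $K\delta_{\cdot\cdot}h_{\cdot}$ and the Hessian corrections $-\tfrac14\delta_{\cdot\cdot}h_{\cdot,mm}$ that the expression becomes symmetric and equal to the stated right-hand side. Getting every sign and numerical coefficient correct---so that the $Kh_{\cdot}\delta_{\cdot\cdot}$ pieces cancel against those hidden inside $3K\check{A}_{ijk}$ while the $\nabla^2 h$ pieces collapse into $\nabla T$---is the delicate part, and this is exactly where the closedness $h_{i,j}=h_{j,i}$ and the definition of $T$ must be used repeatedly.
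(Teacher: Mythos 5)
Your overall strategy is sound and is in essence the same as the paper's proof: both rest on the Codazzi-induced full symmetry of $\nabla A$, the Ricci identity with $R_{ijkl}=K(g_{ik}g_{jl}-g_{il}g_{jk})$, the substitution $h_{i,jk}=T_{ij,k}+\frac12\delta_{ij}h_{l,lk}$, and a final symmetrization; you merely organize the computation as a Simons identity for $A$ plus a Weitzenb\"ock identity for $h$, assembled at the end, where the paper manipulates $\check{A}$ directly throughout. Your three structural facts (full symmetry of $\nabla A$, closedness of $H\lrcorner\omega$, identification of the normal and tangential connections via $J$) are all correct and correctly used.

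However, the two formulas you state explicitly carry the wrong sign, and that sign is exactly what the final cancellation hinges on. For a closed $1$-form on a surface the commutation gives
\begin{equation*}
h_{i,mm}=(h_{l,l})_{,i}+Kh_i,\qquad\text{hence}\qquad T_{km,m}=\tfrac12 (h_{l,l})_{,k}+Kh_k
\end{equation*}
(this is the same computation as the Bochner identity $\varDelta\nabla f=\nabla\varDelta f+\mathrm{Ric}(\nabla f)$ applied to $h=df$ locally), not $-Kh_i$ and $-Kh_k$ as you wrote; the paper's own reduction $T_{jm,m}=\frac12(h_{j,mm}+Kh_j)$ encodes the same $+K$ sign. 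In the convention forced by the $+3K\check{A}_{ijk}$ in the statement, the Simons step reads $A_{ijk,mm}=h_{j,ki}+3KA_{ijk}-K(\delta_{ij}h_k+\delta_{ik}h_j)$, and with the corrected signs all the $K\delta_{\cdot\cdot}h_{\cdot}$ contributions --- from this commutator, from rewriting $3KA_{ijk}=3K\check{A}_{ijk}+\frac{3K}{4}(\delta_{ij}h_k+\delta_{ik}h_j+\delta_{jk}h_i)$, from the corrections $-\frac14\delta_{\cdot\cdot}h_{\cdot,mm}$, and from eliminating $(h_{l,l})_{,k}=2T_{km,m}-2Kh_k$ --- cancel identically, after which symmetrization yields precisely the stated right-hand side. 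If instead you run the assembly with your signs, the $K$-terms do not cancel: after symmetrization a residue $\frac{K}{6}(\delta_{ij}h_k+\delta_{ik}h_j+\delta_{jk}h_i)$ survives, whose trace over $j,k$ equals $\frac{2K}{3}h_i\neq 0$ while both sides of the claimed identity are trace-free. So your own trace consistency check would flag the inconsistency, and the proof as planned does not close until the Weitzenb\"ock sign is corrected.
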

	
	\begin{proof}
		Writing $\check{A}$ under local coordinates:
		\begin{align*}
		\check{A}_{ijk,mm}&=\check{A}_{ijm,km}+\frac{1}{4}(\delta_{ij}h_{m,km}-\delta_{ij}h_{k,mm}+\delta_{im}h_{j,km}-\delta_{ik}h_{j,mm}\\&\quad+\delta_{jm}h_{i,km}-\delta_{jk}h_{i,mm}),
		\end{align*}
		we commute the second order derivative of $\check{A}$ by Ricci's identity:
		\begin{equation*}
		 \check{A}_{ijm,km}=\check{A}_{ijm,mk}+\check{A}_{ljm}R_{ikm}^l+\check{A}_{lim}R_{jkm}^l+\check{A}_{lij}R_{mkm}^l.
		\end{equation*}
		Hence
		\begin{align*}
		& R.H.S.=\check{A}_{ijm,mk}+\check{A}_{ljm}R_{ikm}^l+\check{A}_{lim}R_{jkm}^l+\check{A}_{lij}R_{mkm}^l+\frac{1}{4}(h_{j,ki}+h_{i,kj}\\&\quad-\delta_{ik}h_{j,mm}-\delta_{jk}h_{i,mm}).
		\end{align*}
		Now by the definition of $\check{A}$ again, we may commute the order of $j$ and  $m$:
		\begin{equation*}
			\check{A}_{ijm,mk}=\check{A}_{imm,jk}+\frac{1}{4}(\delta_{im}h_{m,jk}-\delta_{ij}h_{m,mk}+\delta_{im}h_{m,jk}-\delta_{im}h_{j,mk}+\delta_{mm}h_{i,jk}-\delta_{jm}h_{i,mk}).
		\end{equation*}
		So
		\begin{align*}
		&R.H.S.=\check{A}_{imm,jk}+\check{A}_{ljm}R_{ikm}^l+\check{A}_{lim}R_{jkm}^l+\check{A}_{lij}R_{mkm}^l+\frac{1}{4}(h_{j,ki}+h_{i,kj}\\&\quad-\delta_{ik}h_{j,mm}-\delta_{jk}h_{i,mm})+\frac{1}{4}(\delta_{im}h_{m,jk}-\delta_{ij}h_{m,mk}+\delta_{im}h_{m,jk}\\&\quad-\delta_{im}h_{j,mk}+\delta_{mm}h_{i,jk}-\delta_{jm}h_{i,mk}),
		\end{align*}
		where $\check{A}_{imm,jk}=0$ because $\check{A}$ is trace-free.
		
		For surfaces, we have $R_{ijkl}=K(g_{ik} g_{jl}-g_{il} g_{jk})$ by definition. It holds
		\begin{align*}
		&R.H.S.=\check{A}_{ljm}K(\delta_{lk}\delta_{im}-\delta_{lm}\delta_{ik})+\check{A}_{lim}K(\delta_{lk}\delta_{jm}-\delta_{lm}\delta_{jk})+\check{A}_{lij}K(\delta_{lk}\delta_{mm}\\
		&\quad-\delta_{lm}\delta_{km})+\frac{1}{4}(h_{j,ki}+h_{i,kj}-\delta_{ik}h_{j,mm}-\delta_{jk}h_{i,mm})+\frac{1}{4}(\delta_{im}h_{m,jk}\\
		&\quad-\delta_{ij}h_{m,mk}+\delta_{im}h_{m,jk}-\delta_{im}h_{j,mk}+\delta_{mm}h_{i,jk}-\delta_{jm}h_{i,mk})\\
		&=3K\check{A}_{ijk}+\frac{1}{4}(2h_{i,jk}+h_{j,ki}+h_{i,kj})-\frac{1}{4}(\delta_{ik}h_{j,mm}+\delta_{jk}h_{i,mm}+\delta_{ij}h_{m,mk})\\
		&=3K\check{A}_{ijk}+\frac{2h_{i,jk}+h_{j,ki}+h_{i,kj}-\delta_{ik}h_{j,mm}-\delta_{jk}h_{i,mm}-\delta_{ij}h_{m,mk}}{4}.
		\end{align*}
		Substituting terms like $h_{i,jk}$ with $(T_{ij,k}+\frac{1}{2}\delta_{ij}h_{l,lk})$ above, we get
		\begin{align*}
		&R.H.S.=3K\check{A}_{ijk}+\frac{2T_{ij,k}+T_{jk,i}+T_{ik,j}}{4}-\frac{\delta_{jk}h_i+\delta_{ik}h_j}{8}K-\frac{\delta_{ik}h_{j,mm}+\delta_{jk}h_{i,mm}}{8}\\
		&=3K\check{A}_{ijk}+\frac{2T_{ij,k}+T_{jk,i}+T_{ik,j}-\delta_{ik}T_{jm,m}-\delta_{jk}T_{im,m}}{4},
		\end{align*}
		Since $\check{A}$ is fully symmetric, we can apply the method of symmetrization to get (\ref{Laplacian of check A}) as desired.
	\end{proof}
	
	\subsection{Curvature estimates} 
	
	The methods we use in this  part are similar to Kuwert - Sch\"{a}tzle's work in \cite{KS01}.
	
	\begin{lem}
		Assume $f:\Sigma\to\mathbb{C}^2$ is a properly immersed Lagrangian surface (compact or non-compact), and let  $\gamma$ be a cut-off function with $\|\nabla\gamma\|_{L^{\infty}}=\Gamma$, then we have:
		\begin{multline}
		\int_{\Sigma} (|\nabla\check{A}|^2+|H|^2|\check{A}|^2)\gamma^2d\mu\\
		\leq C\int_{\Sigma}\langle\nabla^*T,H\rangle\gamma^2 d\mu+ C\int_{\Sigma}|\check{A}|^4\gamma^2 d\mu+C\Gamma^2\int_{\{\gamma>0\}}|A|^2 d\mu.
		\end{multline}\label{estimates for nabla check A}
		where $C$ is a positive constant independent of $f$.
	\end{lem}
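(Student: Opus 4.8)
The plan is to pair the Bochner identity \eqref{Laplacian of check A} with $\check A$, weighted by $\gamma^2$, in the spirit of Kuwert--Sch\"{a}tzle, and to rewrite every resulting term as one of the three pieces on the right-hand side. I would begin by integrating by parts the left-hand side, the boundary term vanishing since $\gamma\in C^1_c(\Sigma)$:
\[
\int_\Sigma|\nabla\check A|^2\gamma^2\,d\mu=-\int_\Sigma\check A_{ijk,mm}\,\check A_{ijk}\,\gamma^2\,d\mu-\int_\Sigma\check A_{ijk,l}\,\check A_{ijk}\,(\gamma^2)_{,l}\,d\mu.
\]
Substituting \eqref{Laplacian of check A} for $\check A_{ijk,mm}$ splits the first integral into a curvature part $-\int 3K|\check A|^2\gamma^2$ and a part carrying the derivatives of $T$, while the last integral is a cut-off remainder.

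For the curvature part I would combine the Gauss equation \eqref{Gauss equ.} with \eqref{(2.1)} to obtain $K=\tfrac18|H|^2-\tfrac12|\check A|^2$, so that $-3K|\check A|^2=-\tfrac38|H|^2|\check A|^2+\tfrac32|\check A|^4$. Transposing the $|H|^2|\check A|^2$ contribution to the left reconstitutes, up to a fixed positive constant, the good quantity $\int(|\nabla\check A|^2+|H|^2|\check A|^2)\gamma^2$, while $\tfrac32\int|\check A|^4\gamma^2$ is already of the admissible form.

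The decisive step is the $T$-part. Since $\check A$ is fully symmetric and trace-free, the three symmetrized blocks of \eqref{Laplacian of check A} coincide upon contraction with $\check A$ and the Kronecker-$\delta$ terms drop, so this part reduces to $-\int\check A_{ijk}T_{ij,k}\gamma^2$. I would integrate by parts to move the derivative off $T$ and invoke the coordinate form $\check A_{ijk,k}=\tfrac12T_{ij}$ of the definition $T=-2\nabla^*(\check A\lrcorner\omega)$; this yields $\tfrac12\int|T|^2\gamma^2$ plus a cut-off term. A second integration by parts, using that $T$ is trace-free so that $|T|^2=T_{ij}h_{i,j}$, together with $T_{ij,j}=-(\nabla^*T)_i$, turns $\tfrac12\int|T|^2\gamma^2$ into $\tfrac12\int\langle\nabla^*T,H\rangle\gamma^2$ plus a further cut-off term. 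This is exactly the role of $T$ advertised in the introduction: it funnels the otherwise uncontrolled second derivatives of $H$ into the single divergence $\nabla^*T$, which vanishes under the hypothesis of Theorem~\ref{Theorem 1.1}.

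Finally I would absorb the cut-off remainders. Each one has the schematic shape $\int(\nabla\check A)\,(\check A\ \text{or}\ H)\,\gamma\,\nabla\gamma$, because $T\sim\nabla\check A$ forces every $T$-factor to be first order in $\check A$; Young's inequality bounds each by $\epsilon\int|\nabla\check A|^2\gamma^2+C\Gamma^2\int_{\{\gamma>0\}}(|\check A|^2+|H|^2)$. Taking $\epsilon$ small absorbs the gradient part into the left-hand side, and the pointwise bounds $|\check A|^2\le|A|^2$ and $|H|^2\le\tfrac43|A|^2$, both immediate from \eqref{(2.1)}, control the remainder by $C\Gamma^2\int_{\{\gamma>0\}}|A|^2$, giving the claimed estimate. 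The hard part will be the bookkeeping of the top-order terms: as the author stresses, smallness of $\check A$ gives no a priori control on $H$, so the second derivatives of $H$ cannot be estimated directly, and the whole argument hinges on arranging, through the two integrations by parts and the identities $\check A_{ijk,k}=\tfrac12T_{ij}$ and $T_{ij}=h_{i,j}-\tfrac12h_{l,l}g_{ij}$, that they recombine precisely into $\langle\nabla^*T,H\rangle$ rather than leaving a stray $\int|\nabla H|^2\gamma^2$ that the hypotheses could not absorb.
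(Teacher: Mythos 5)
Your proposal is correct and follows essentially the same route as the paper: pair the Bochner identity with $\gamma^2\check A$, use the Gauss equation together with \eqref{(2.1)} to produce the $|H|^2|\check A|^2$ and $|\check A|^4$ terms, perform two integrations by parts turning the $T$-part first into $\tfrac12\int|T|^2\gamma^2$ and then into $\tfrac12\int\langle\nabla^*T,H\rangle\gamma^2$, and absorb the cut-off remainders via $|T|\le c|\nabla\check A|$, Young's inequality and $|\check A|^2+|H|^2\le C|A|^2$. The only discrepancy is the sign in your identity $\check A_{ijk,k}=\tfrac12 T_{ij}$ versus the paper's displayed $T_{ij}=-2\check A_{ijl,l}$; your sign is the one actually consistent with the Codazzi computation in Proposition 3.1 (the paper's definition carries a sign typo), and it does not affect the final estimate.
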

	\begin{proof}
		Multiplying (\ref{Laplacian of check A}) by $\check{A}$ we get
		\begin{multline}\label{Laplacian of check A}
		\check{A}_{ijk,mm}\check{A}^{ijk}=3K\check{A}_{ijk}\check{A}^{ijk}\\
		+(\frac{T_{ij,k}-\frac{1}{2}\delta_{ij}T_{km,m}}{3}+\frac{T_{jk,i}-\frac{1}{2}\delta_{jk}T_{im,m}}{3}+\frac{T_{ik,j}-\frac{1}{2}\delta_{ik}T_{jm,m}}{3})\check{A}^{ijk}.
		\end{multline}
		Since $\check{A}$ is trace-free, terms like $\delta_{ij}T_{km,m}\check{A}^{ijk}$ will vanish. Now  (\ref{Laplacian of check A}) becomes
		\begin{equation}
			\langle \varDelta\check{A},\check{A}\rangle=\langle\nabla T,\check{A} \rangle+3K|\check{A}|^2.
		\end{equation}
		Using the Gau\ss{} equation (\ref{Gauss equ.}) and proposition 3.1, we have $K=\frac{|H|^2}{8}-\frac{|\check{A}|^2}{2}$. Multiplying (3.10) by $\gamma^2$ and integrating it on the surface, we have
		\begin{equation}
		\int_{\Sigma}\langle \varDelta\check{A},\gamma^2\check{A}\rangle d\mu=\int_{\Sigma}\langle\nabla T,\gamma^2\check{A} \rangle d\mu+\frac{3}{8}\int_{\Sigma}|H|^2|\check{A}|^2\gamma^2 d\mu-\frac{3}{2}\int_{\Sigma} |\check{A}|^4\gamma^2 d\mu.
		\end{equation}
		As for the L.H.S., we obtain
		\begin{align*}
		L.H.S.&=-\int_{\Sigma}\langle\nabla^*\nabla\check{A},\gamma^2\check{A}\rangle d\mu\\
		&=-\int_{\Sigma}|\nabla\check{A}|^2\gamma^2d\mu-2\int_{\Sigma}\langle\nabla\check{A},\gamma\nabla\gamma\otimes\check{A}\rangle d\mu.
		\end{align*}
		Now for the first term of R.H.S., we use the definition of $T$ and integrate it by parts
		\begin{align*}
		\int_{\Sigma}\langle \nabla T,\gamma^2\check{A}\rangle d\mu&=\int_{\Sigma}\langle T,\gamma^2\nabla^*\check{A}\rangle d\mu-2\int_{\Sigma}\langle T\otimes\nabla\gamma,\gamma\check{A}\rangle d\mu\\
		&=-\frac{1}{2}\int_{\Sigma}\langle T,T\rangle\gamma^2 d\mu-2\int_{\Sigma}\langle T\otimes\nabla\gamma,\gamma\check{A}\rangle d\mu\\
		&=-\frac{1}{2}\int_{\Sigma}\langle T,\nabla H-\frac{1}{2}\operatorname{div}JH\cdot g\rangle\gamma^2 d\mu-2\int_{\Sigma}\langle T\otimes\nabla\gamma,\gamma\check{A}\rangle d\mu\\
		&=-\frac{1}{2}\int_{\Sigma}\langle T,\nabla H\rangle\gamma^2 d\mu-2\int_{\Sigma}\langle T\otimes\nabla\gamma,\gamma\check{A}\rangle d\mu\\
		&=-\frac{1}{2}\int_{\Sigma}\langle\nabla^*T,H\rangle\gamma^2 d\mu+\int_{\Sigma}\langle T,H\otimes\nabla\gamma\rangle\gamma d\mu-2\int_{\Sigma}\langle T\otimes\nabla\gamma,\gamma\check{A}\rangle d\mu.
		\end{align*}
		
		Hence with the help of $|T|\leq c|\nabla\check{A}|$ and $|\nabla\gamma|\leq C_0\Gamma^2$ we have
		\begin{align*}
		\int_{\Sigma}(&|\nabla\check{A}|^2+\frac{3}{8}|H|^2|\check{A}|^2)\gamma^2 d\mu\\
		&=\frac{1}{2}\int_{\Sigma}\langle\nabla^*T,H\rangle\gamma^2 d\mu+\frac{3}{2}\int_{\Sigma}|\check{A}|^4\gamma^2 d\mu-2\int_{\Sigma}\langle\nabla\check{A},\gamma\nabla\gamma\otimes\check{A}\rangle d\mu\\
		&\quad-\int_{\Sigma}\langle T,H\otimes\nabla\gamma\rangle\gamma d\mu+2\int_{\Sigma}\langle T\otimes\nabla\gamma,\gamma\check{A}\rangle d\mu\\
		&\leq\frac{1}{2}\int_{\Sigma}\langle\nabla^*T,H\rangle\gamma^2 d\mu+\frac{3}{2}\int_{\Sigma}|\check{A}|^4\gamma^2 d\mu+C_0\Gamma^2\int_{\{\gamma>0\}}|\check{A}|^2 d\mu+\frac{1}{2}\int_{\Sigma}|\nabla\check{A}|^2\gamma^2 d\mu\\
		&\quad+C_0\Gamma^2\int_{\{\gamma>0\}}|H|^2 d\mu\\
		&\leq\frac{1}{2}\int_{\Sigma}\langle\nabla^*T,H\rangle\gamma^2 d\mu+\frac{3}{2}\int_{\Sigma}|\check{A}|^4\gamma^2 d\mu+\frac{1}{2}\int_{\Sigma}|\nabla\check{A}|^2\gamma^2 d+C\Gamma^2\int_{\{\gamma>0\}}|A|^2 d\mu.
		\end{align*}
	\end{proof}
	
	We now need the general Sobolev inequality of Michael-Simon to absorb $\int_{\Sigma}|\check{A}|^4\gamma^2 d\mu$.
	
	\begin{theo*}[Sobolev inequality with $m=2$, \text{\cite[Th.~2.1]{MS73}}]
		Let $f:\Sigma\to\mathbb{C}^2$ be an immersion and $v$ be a non-negative $C^1_c(U)$ function on $\Sigma$, where $U\subseteq \mathbb{C}^2$ is a domain contains $f(\Sigma)$. Then
		\begin{equation}
		\int_\Sigma v^2d\mu\leq c\Bigg(\int_\Sigma|\nabla v|d\mu\Bigg)^2+c\Bigg(\int_\Sigma v|H|d\mu\Bigg)^2\label{M-S Sobolev},
		\end{equation}
		where $H$ is the mean curvature vector and c is a constant independent of $f$ .
	\end{theo*}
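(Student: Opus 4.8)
Since this is the Michael--Simon Sobolev inequality in the critical dimension $m=2$, where the Gagliardo exponent $m/(m-1)$ equals $2$, the plan is to prove the equivalent linear form
\[
\Big(\int_{\Sigma}v^{2}\,d\mu\Big)^{1/2}\le c\int_{\Sigma}\big(|\nabla v|+v|H|\big)\,d\mu,\qquad v\in C^{1}_{c}(\Sigma),\ v\ge 0,
\]
and then recover the stated inequality by squaring and applying $(a+b)^{2}\le 2(a^{2}+b^{2})$. I would follow the classical two-step Michael--Simon scheme: first a weighted monotonicity inequality for the measure $v\,d\mu$ about an arbitrary center, then a covering argument converting the resulting pointwise density bound into a global integral estimate.

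The foundation is the first variation (tangential divergence) identity: for $X\in C^{1}_{c}(\mathbb{C}^{2};\mathbb{C}^{2})$,
\[
\int_{\Sigma}\operatorname{div}_{\Sigma}X\,d\mu=-\int_{\Sigma}\langle X,H\rangle\,d\mu,\qquad \operatorname{div}_{\Sigma}X:=\textstyle\sum_{i}\langle D_{e_{i}}X,e_{i}\rangle,
\]
which follows by splitting $X=X^{\top}+X^{\perp}$, applying the intrinsic divergence theorem to $X^{\top}$, and using the Weingarten relation $\operatorname{div}_{\Sigma}X^{\perp}=-\langle X,H\rangle$. Fixing $\xi\in\mathbb{C}^{2}$ and writing $r=|x-\xi|$, I would insert the radial test fields $X=v\,\phi(r)(x-\xi)$ with $\phi$ a Lipschitz cutoff approximating $\mathbf{1}_{\{r<\rho\}}$. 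A direct computation gives $\operatorname{div}_{\Sigma}\big(\phi(r)(x-\xi)\big)=2\phi(r)+r\phi'(r)|\nabla r|^{2}$, where $\nabla r=(x-\xi)^{\top}/r$; feeding this into the first variation identity and letting $\phi\to\mathbf{1}_{\{r<\rho\}}$ produces, after integrating in $\rho$, the weighted monotonicity estimate
\[
\sigma^{-2}\!\int_{B_{\sigma}(\xi)}\!v\,d\mu\le \rho^{-2}\!\int_{B_{\rho}(\xi)}\!v\,d\mu+c\!\int_{B_{\rho}(\xi)}\frac{|\nabla v|+v|H|}{r}\,d\mu,\qquad 0<\sigma<\rho.
\]

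Letting $\sigma\to 0$ and using that at a regular point the density of $v\,d\mu$ equals $\pi v(\xi)$, the monotonicity yields the pointwise bound
\[
\pi\,v(\xi)\le \rho^{-2}\!\int_{B_{\rho}(\xi)}\!v\,d\mu+c\!\int_{B_{\rho}(\xi)}\frac{|\nabla v|+v|H|}{r}\,d\mu
\]
for every $\rho>0$. For each $\xi$ with $v(\xi)>0$ I would then select a balancing radius $\rho(\xi)$ at which the two right-hand terms are comparable --- such a radius exists because the mass ratio $\rho^{-2}\int_{B_{\rho}(\xi)}v\,d\mu$ tends to $\pi v(\xi)$ as $\rho\to0$ and to $0$ as $\rho\to\infty$ (here compact support of $v$ is used) --- so that $v(\xi)$ is controlled solely by the Riesz potential $\int_{B_{\rho(\xi)}(\xi)}r^{-1}(|\nabla v|+v|H|)\,d\mu$, while simultaneously $\int_{B_{\rho(\xi)}(\xi)}v\,d\mu\gtrsim v(\xi)\,\rho(\xi)^{2}$. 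A Vitali covering of each superlevel set $\{v>t\}$ by these balls, combined with the layer-cake identity $\int v^{2}=2\int_{0}^{\infty}t\,\mu(\{v>t\})\,dt$, then sums the local estimates into the global $L^{2}$ bound. The first variation identity and the divergence computation are routine; I expect the genuine obstacles to be the rigorous passage $\phi\to\mathbf{1}_{\{r<\rho\}}$ in the monotonicity step (controlling the boundary contribution and the measure-theoretic density away from regular points) and the combinatorial bookkeeping of the covering argument that pins down the dimensional constant $c$.
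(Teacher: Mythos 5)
First, a point of comparison that matters here: the paper does not prove this statement at all. It is quoted verbatim from Michael--Simon \cite{MS73} and used as a black box (in Lemma 3.2, with $v=|\check{A}|^2\gamma$). So your proposal cannot be measured against a ``paper's own proof''; it has to stand on its own as a reconstruction of the Michael--Simon argument. As such a reconstruction, the pieces you actually write down are correct: the reduction of the stated quadratic inequality to the linear form $\big(\int_\Sigma v^2\,d\mu\big)^{1/2}\le c\int_\Sigma(|\nabla v|+v|H|)\,d\mu$ via $(a+b)^2\le 2(a^2+b^2)$; the first variation identity $\int_\Sigma\operatorname{div}_\Sigma X\,d\mu=-\int_\Sigma\langle X,H\rangle\,d\mu$; the computation $\operatorname{div}_\Sigma\big(\phi(r)(x-\xi)\big)=2\phi(r)+r\phi'(r)|\nabla r|^2$; the resulting almost-monotonicity of $\rho^{-2}\int_{B_\rho}v\,d\mu$ with the $r^{-1}$-weighted error term (obtained by integrating the differential inequality and exchanging the order of integration); and the density lower bound $\pi v(\xi)$ as $\sigma\to 0$ (for an immersion the density is at least $\pi$ at points of the image, which is the direction needed). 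The balancing-radius selection also works as stated: both competing terms are continuous in $\rho$, with the density term tending to $\pi v(\xi)$ and the Riesz potential tending to $0$ as $\rho\to 0$, and the reverse behaviour as $\rho\to\infty$ by compact support, so a crossing radius exists and yields both $v(\xi)\lesssim\int_{B_{\rho(\xi)}}r^{-1}(|\nabla v|+v|H|)\,d\mu$ and $\int_{B_{\rho(\xi)}}v\,d\mu\gtrsim v(\xi)\rho(\xi)^2$.

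The genuine gap is the final step, which you compress into ``Vitali covering plus layer-cake.'' This is not bookkeeping; it is where essentially all of the difficulty of \cite{MS73} lives, and the step as you describe it does not close. Two concrete problems. First, after extracting a disjoint family $B_{\rho_i}(\xi_i)$ with $\{v>t\}\subset\bigcup_i B_{5\rho_i}(\xi_i)$, bounding $\mu(\{v>t\})\le\sum_i\mu(\{v>t\}\cap B_{5\rho_i}(\xi_i))$ requires \emph{upper} mass bounds on the enlarged balls, while your selection (and the almost-monotonicity, which says the density ratio is almost \emph{increasing}) only provides \emph{lower} bounds on mass at the selected radius and below it; nothing controls $\int_{B_{5\rho_i}}v\,d\mu$ in terms of $\rho_i^2 v(\xi_i)$. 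Second, the local estimates you would sum are Riesz potentials $\int_{B_{\rho_i}}r_i^{-1}(|\nabla v|+v|H|)\,d\mu$ with different centers; these overlap and carry the singular weight $r_i^{-1}$, so they do not add up to $c\int_\Sigma(|\nabla v|+v|H|)\,d\mu$ without a further argument. The actual Michael--Simon proof circumvents both issues by a more intricate selection (the \emph{first} radius at which a density threshold is crossed, applied at the level of superlevel sets) and by running the layer-cake decomposition on sets via the coarea formula and Minkowski's integral inequality $\|v\|_{L^2}\le\int_0^\infty\mu(\{v>t\})^{1/2}\,dt$, rather than through the pointwise formula $\int v^2=2\int_0^\infty t\,\mu(\{v>t\})\,dt$. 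So: correct skeleton, correct computations where you give them, but the heart of the theorem is still missing.
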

	
	\begin{lem}
		Under the same assumption as in Lemma 3.1,
		\begin{equation}
		\int_{\Sigma}|\check{A}|^4\gamma^2 d\mu\leq C\int_{\{\gamma>0\}}|\check{A}|^2 d\mu\int_{\Sigma}(|\nabla\check{A}|^2+|H|^2|\check{A}|^2)\gamma^2 d\mu+C\Gamma^2\Bigg(\int_{\{\gamma>0\}}|\check{A}|^2d\mu\Bigg)^2
		\end{equation}
	holds.
	\end{lem}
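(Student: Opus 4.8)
The plan is to apply the Michael--Simon Sobolev inequality \eqref{M-S Sobolev} with the test function $v=|\check{A}|^2\gamma$. This is a legitimate choice, since $|\check{A}|^2$ is a smooth non-negative function on $\Sigma$ and $\gamma\in C^1_c(\Sigma)$, so that $v$ is a non-negative $C^1_c$ function. With this choice the left-hand side of \eqref{M-S Sobolev} becomes exactly $\int_\Sigma v^2\,d\mu=\int_\Sigma|\check{A}|^4\gamma^2\,d\mu$, which is precisely the quantity we want to estimate; it then remains only to bound the two terms on the right-hand side by the desired expression.

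For the gradient term I would first compute $\nabla v=\gamma\,\nabla|\check{A}|^2+|\check{A}|^2\,\nabla\gamma$ and invoke Kato's inequality $\bigl|\nabla|\check{A}|^2\bigr|=2\bigl|\langle\nabla\check{A},\check{A}\rangle\bigr|\le 2|\check{A}||\nabla\check{A}|$ to obtain the pointwise bound $|\nabla v|\le 2\gamma|\check{A}||\nabla\check{A}|+|\check{A}|^2|\nabla\gamma|$. Integrating, applying Cauchy--Schwarz to the first summand (pairing $\gamma|\nabla\check{A}|$ with $|\check{A}|$), and using $|\nabla\gamma|\le\Gamma$ supported on $\{\gamma>0\}$ in the second, I get
\[
\int_\Sigma|\nabla v|\,d\mu\le 2\Big(\int_\Sigma|\nabla\check{A}|^2\gamma^2\,d\mu\Big)^{1/2}\Big(\int_{\{\gamma>0\}}|\check{A}|^2\,d\mu\Big)^{1/2}+\Gamma\int_{\{\gamma>0\}}|\check{A}|^2\,d\mu.
\]
Squaring this with $(a+b)^2\le 2a^2+2b^2$ produces a term proportional to $\big(\int_{\{\gamma>0\}}|\check{A}|^2\big)\int_\Sigma|\nabla\check{A}|^2\gamma^2$ together with a term $\Gamma^2\big(\int_{\{\gamma>0\}}|\check{A}|^2\big)^2$. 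For the mean-curvature term I would write $v|H|=(\gamma|H||\check{A}|)\,|\check{A}|$ and again apply Cauchy--Schwarz to reach $\big(\int_\Sigma v|H|\,d\mu\big)^2\le\big(\int_\Sigma|H|^2|\check{A}|^2\gamma^2\,d\mu\big)\big(\int_{\{\gamma>0\}}|\check{A}|^2\,d\mu\big)$.

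Summing the two estimates and absorbing the $|\nabla\check{A}|^2\gamma^2$ and $|H|^2|\check{A}|^2\gamma^2$ contributions into the single factor $\int_\Sigma(|\nabla\check{A}|^2+|H|^2|\check{A}|^2)\gamma^2\,d\mu$ yields the claimed inequality. There is no serious conceptual obstacle, as the argument is a direct consequence of \eqref{M-S Sobolev}; the only points requiring care are the bookkeeping in the Cauchy--Schwarz splittings, namely ensuring that at each step one factor is integrated over $\{\gamma>0\}$ rather than over all of $\Sigma$ (this is what pulls the small-energy quantity $\int_{\{\gamma>0\}}|\check{A}|^2\,d\mu$ out front and makes it available for absorption later), and the verification that $v=|\check{A}|^2\gamma$ meets the regularity hypotheses of \eqref{M-S Sobolev}.
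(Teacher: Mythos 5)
Your proposal is correct and follows essentially the same route as the paper: substituting $v=|\check{A}|^2\gamma$ into the Michael--Simon Sobolev inequality, bounding $|\nabla v|$ via Kato's inequality and the Leibniz rule, and then splitting each term by Cauchy--Schwarz so that one factor is $\int_{\{\gamma>0\}}|\check{A}|^2\,d\mu$. You have merely spelled out the intermediate estimates that the paper leaves implicit.
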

	\begin{proof}
		Substituting $v=|\check{A}|^2\gamma$ in (\ref{M-S Sobolev}), we have
		\begin{align*}
		\int_{\Sigma}&|\check{A}|^4\gamma^2 d\mu\\
		&\leq c\Bigg(\int_{\Sigma}|\check{A}||\nabla\check{A}|\gamma d\mu\Bigg)^2+c\Bigg(\int_{\Sigma}|\check{A}|^2|\nabla\gamma| d\mu\Bigg)^2+c\Bigg(\int_{\Sigma} |\check{A}|^2|H|\gamma d\mu\Bigg)^2\\
		&\leq c'\int_{\{\gamma>0\}}|\check{A}|^2 d\mu\int_{\Sigma}(|\nabla\check{A}|^2+|H|^2|\check{A}|^2)\gamma^2 d\mu+c\Gamma^2\Bigg(\int_{\{\gamma>0\}}|\check{A}|^2d\mu\Bigg)^2.
		\end{align*}
	\end{proof}

\begin{proof}[Proof of the Theorem \ref{Theorem 1.1}]
	By combining Lemma 3.1 and Lemma 3.2, one can straightforwardly get
	\begin{align*}
	\int _{\Sigma}(|\nabla\check{A}|^2+|H|^2|\check{A}|^2)\gamma^2d\mu&\leq C'\int_{\{\gamma>0\}}|\check{A}|^2 d\mu\int_{\Sigma}(|\nabla\check{A}|^2+|H|^2|\check{A}|^2)\gamma^2 d\mu\\
	&+C'\Gamma^2\int_{\{\gamma>0\}}|A|^2 d\mu+C'\Gamma^2\Bigg(\int_{\{\gamma>0\}}|\check{A}|^2d\mu\Bigg)^2.
	\end{align*}
	If we choose $\epsilon_0<min\{1,\frac{1}{2C'}\}$ and assume $\int_{\{\gamma>0\}}|\check{A}|^2 d\mu\leq\epsilon_0$, it holds 
	\begin{equation*}
		\int _{\Sigma}(|\nabla\check{A}|^2+|H|^2|\check{A}|^2)\gamma^2d\mu\leq\frac{C''\Gamma^2}{1-C'\int_{\{\gamma>0\}}|\check{A}|^2 d\mu}\int_{\{\gamma>0\}}|A|^2 d\mu\leq \tilde{C}\Gamma^2\int_{\{\gamma>0\}}|A|^2 d\mu,
	\end{equation*}
	let $\psi\in C^1(\mathbb{R})$ be a non-negative function which satisfies:
	
	\begin{equation*}
	\psi(t)=
	\begin{cases}
	1,& t\leq \frac{1}{2},\\
	0,& t\geq 1.
	\end{cases}
	\end{equation*}
	and $\gamma(p)=\psi(\frac{1}{R}|f(p)|)\in C^1_c(\Sigma)$ be the cut-off function for any $p\in\Sigma$ and radius $R>0$, then $\Gamma\leq\frac{C_0}{R}$  holds and we obtain
	\begin{equation*}
		\int _{\Sigma}(|\nabla\check{A}|^2+|H|^2|\check{A}|^2)\gamma^2d\mu\leq \frac{C}{R^2}\int_{\{\gamma>0\}}|A|^2 d\mu
	\end{equation*}
	as dersired.
\end{proof}

\begin{proof}[Proof of the Theorem \ref{Theorem 1.2}]
	 Letting $R\to\infty$ in (3.14), we have $\check{A}=0$ by the arbitrariness of cut off function $\gamma$. Then using the classification theorem for Lagrangian umblical surfaces (see \cite{CU93,RU98}), we see that $f$ is either a Lagrangian plane or a Whitney sphere.
\end{proof}

 \section{On the relationship with Willmore immersion}
 
For an immersed surface $f:\Sigma \to \mathbb{R}^n$, the Willmore functional is defined by
\begin{equation}
\mathcal{W}(f)=\int_{\Sigma}|\text{\AA{}}|^2 d\mu \label{Willmore Functional},
\end{equation}
where we denote $d\mu$ as the induced area element on $\Sigma$ by $f$.

The Euler-Lagrange operator of (\ref{Willmore Functional}) is
\begin{equation}
\operatorname{W}=\varDelta H+Q(\text{\AA{}})H, \label{E-L equ-1}
\end{equation}
where
\begin{equation}
Q(\text{\AA{}})\phi=\Sigma_{i,j=1}^{2}\text{\AA{}}(e_{i},e_{j})\langle\text{\AA{}}(e_i,e_j),\phi\rangle \label{definition of Q(A)H}
\end{equation}
under an orthonormal basis $\{e_{1},e_2{}\}$.
The following lemma reveals the relationship between $T$ and the Euler-Lagrange equation of the Willmore functional:
\begin{lem}
	If $f:\Sigma\to\mathbb{C}^2$ is a properly immersed Lagrangian surface, the Euler-Lagrange equation of the Willmore functional can be reformulated as:
	\begin{equation*}
	\operatorname{W}=\varDelta H+\frac{1}{8}|H|^2 H+Q(\check{A})H+\check{A}(JH,JH).
	\end{equation*}
	or in the dual form associated with the symplectic form $\omega$:
	\begin{equation}
	\operatorname{W}\lrcorner\omega=-2\nabla^* T+\frac{1}{2}|\check{A}|^2 H\lrcorner\omega+Q(\check{A})H+\check{A}\lrcorner\omega(JH,JH). \label{E-L equ.2}
	\end{equation}
\end{lem}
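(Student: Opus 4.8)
The plan is to start from the classical Willmore operator $\operatorname{W}=\varDelta H+Q(\text{\AA{}})H$ of \eqref{E-L equ-1}--\eqref{definition of Q(A)H} and simply trade the Euclidean trace-free tensor $\text{\AA{}}$ for the Lagrangian one $\check{A}$. Comparing $\text{\AA{}}=A-\tfrac12 g\otimes H$ with the definition of $\check{A}$ and contracting against $Je_k$ (using $\langle Je_i,H\rangle=h_i$ and $\langle Je_j,Je_k\rangle=\delta_{jk}$, since $J$ is an isometry) one gets the purely algebraic relation
\begin{equation*}
\text{\AA{}}_{ijk}=\check{A}_{ijk}+B_{ijk},\qquad B_{ijk}:=-\tfrac14\delta_{ij}h_k+\tfrac14\delta_{jk}h_i+\tfrac14\delta_{ik}h_j .
\end{equation*}
This is the only input needed to pass from one trace-free part to the other.

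Next I would insert this into $Q(\text{\AA{}})H$. Writing the dual components $\langle Q(\text{\AA{}})H,Je_l\rangle=\sum_{i,j,m}(\check{A}_{ijl}+B_{ijl})(\check{A}_{ijm}+B_{ijm})h_m$ and expanding into four pieces, the pure $\check{A}$ piece is exactly $\langle Q(\check{A})H,Je_l\rangle$. For the two cross pieces I would use that $JH=-\sum_i h_ie_i$ is tangential, so $\check{A}(JH,JH)$ has components $\sum_{i,j}h_ih_j\check{A}_{ijl}$; together with the contraction $\sum_m B_{ijm}h_m=\tfrac14(-\delta_{ij}|H|^2+2h_ih_j)$ and the trace-freeness and full symmetry of $\check{A}$, both cross pieces collapse to $\check{A}\lrcorner\omega(JH,JH)$ (the $|H|^2$ parts die against $\sum_i\check{A}_{iil}=0$). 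The remaining $B\cdot B$ piece is a short Kronecker-delta computation in dimension two, using $\sum_m h_m^2=|H|^2$, which yields $\tfrac18|H|^2h_l$. Collecting the three surviving pieces gives $Q(\text{\AA{}})H=Q(\check{A})H+\check{A}(JH,JH)+\tfrac18|H|^2H$, i.e. the first displayed identity of the lemma.

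It remains to recast $\varDelta H+\tfrac18|H|^2H$ in the dual form, and this Bochner step is where I expect the real work to lie. Dualizing with $\omega$ turns the normal rough Laplacian into $(\varDelta H)\lrcorner\omega(e_j)=h_{j,mm}$; here one must invoke the Lagrangian fact that $J$ intertwines the Levi-Civita connection of $g$ with the normal connection, so that covariant derivatives of the one-form $H\lrcorner\omega$ agree with normal derivatives of $H$ and no spurious curvature term appears. From $T_{ij}=h_{i,j}-\tfrac12 h_{l,l}\delta_{ij}$ and the symmetry $h_{i,j}=h_{j,i}$ furnished by the Codazzi identity \eqref{Codazzi equ.}, a direct divergence computation gives $-2(\nabla^*T)_j=2h_{j,mm}-\sum_l h_{l,lj}$. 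The crux is then to commute the two derivatives in $\sum_m\nabla_m\nabla_j h_m$ (which by symmetry equals $h_{j,mm}$) by Ricci's identity, with the same curvature convention and $R_{ijkl}=K(g_{ik}g_{jl}-g_{il}g_{jk})$ used in the proof of the Bochner identity above; this produces the Weitzenböck-type relation $h_{j,mm}-\sum_l h_{l,lj}=K\,h_j$. Substituting it back gives $-2\nabla^*T=\varDelta H\lrcorner\omega+K\,H\lrcorner\omega$, and finally the Gauss equation \eqref{Gauss equ.} together with Proposition 3.1, in the form $K=\tfrac18|H|^2-\tfrac12|\check{A}|^2$, converts the curvature term into $\tfrac18|H|^2H\lrcorner\omega-\tfrac12|\check{A}|^2H\lrcorner\omega$. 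Dualizing the first identity term by term and substituting $\varDelta H\lrcorner\omega+\tfrac18|H|^2H\lrcorner\omega=-2\nabla^*T+\tfrac12|\check{A}|^2H\lrcorner\omega$ then yields \eqref{E-L equ.2}. The main obstacle is the sign and curvature bookkeeping in this commutation, and the verification that the $\omega$-dualization of $\varDelta H$ is curvature-free; once $K$ is isolated, the Gauss equation finishes the argument.
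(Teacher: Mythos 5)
Your proposal is correct and takes essentially the same route as the paper's proof: expand the $Q$-term algebraically via the decomposition of the second fundamental form into $\check{A}$ plus Kronecker-delta terms, then use the Codazzi symmetry $h_{i,j}=h_{j,i}$, the Ricci commutation $h_{j,mm}-h_{l,lj}=K\,h_j$, and the Gauss equation $K=\tfrac18|H|^2-\tfrac12|\check{A}|^2$ to identify $\varDelta H\lrcorner\omega+\tfrac18|H|^2H\lrcorner\omega$ with $-2\nabla^*T+\tfrac12|\check{A}|^2H\lrcorner\omega$. The only difference is cosmetic: you expand $Q(\text{\AA{}})H$ directly, whereas the paper expands $Q(A)H=Q(\check{A})H+\check{A}(JH,JH)+\tfrac58|H|^2H$ and implicitly uses $Q(\text{\AA{}})H=Q(A)H-\tfrac12|H|^2H$ to arrive at the same $\tfrac18|H|^2H$ term.
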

\begin{proof}
	By (\ref{E-L equ-1}) and (\ref{definition of Q(A)H}), we have
	\begin{align*}
	&Q(A)\\
	&=A(e_i,e_j)\langle A(e_i,e_j),H\rangle \\
	&=[\check{A}_{ijh}+\frac{1}{4}(\delta_{ij}h_h+\delta_{ih}h_j+\delta_{jh}h_i)][\check{A}_{ijs}+\frac{1}{4}(\delta_{ij}h_s+\delta_{is}h_j+\delta_{js}h_i)]\langle Je_s,H\rangle Je_h \\
	&=\check{A}_{ijs}\langle \check{A}_{ijh},H\rangle\langle Je_s,H\rangle Je_h+\check{A}_{ijh} h_i h_j Je_h+\frac{5}{8}|H|^2H\\
	&=Q(\check{A})H+\check{A}(JH,JH)+\frac{5}{8}|H|^2H.
	\end{align*}
	Using the Ricci identity (\ref{Ricci equ.}), the Gau\ss{} equation (\ref{Gauss equ.}) and the Codazzi equation (\ref{Codazzi equ.}), we calculate
	\begin{align*}
	\nabla^*T(e_i)&=-T_{ki,k}\\
	&=-h_{i,kk}+\frac{1}{2}\delta_{ki}h_{l,lk} \\
	&=-h_{i,kk}+\frac{1}{2}(h_{i,ll}+h_s R_{lli}^s) \\
	&=-\frac{1}{2}\varDelta(H\lrcorner\omega)(e_i)-\frac{K}{2}h_i\\
	&=-\frac{1}{2}\varDelta (H\lrcorner\omega)(e_i)-\frac{1}{2}(\frac{|H|^2}{8}-\frac{|\check{A}|^2}{2})H\lrcorner\omega(e_i),
	\end{align*}
	where $K$ is the Gaussian curvature of the surface. Substituting the above two formulas into (\ref{E-L equ-1}) we complete the proof.
\end{proof}

 The Willmore Lagrangian surface is a Lagrangian surface satisfying the Euler-Lagrange equation (\ref{E-L equ-1}) Now let's recall Urbano and Castro's classification theorem as follows:
 
\begin{theo*}[Uniqueness of Willmore Lagrangian spheres,\text{\cite[Cor.~1]{CU01}}]
	The Whitney sphere is the only Willmore Lagrangian surface of genus 0 in $\mathbb{C}^2$.
\end{theo*}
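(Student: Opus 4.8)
The plan is to reduce the statement to the umbilicity condition $\check{A}\equiv 0$ and then invoke the Castro--Urbano/Ros--Urbano classification recalled above. Indeed, the equation $A(v,w)=\tfrac14\{\langle v,w\rangle H+\langle Jv,H\rangle Jw+\langle Jw,H\rangle Jv\}$ appearing there is exactly $\check{A}\equiv 0$, so that classification says a Lagrangian surface with $\check{A}\equiv 0$ is totally geodesic or (an open piece of) the Whitney sphere. A closed surface of genus $0$ has $\int_{\Sigma}K\,d\mu=4\pi>0$ by Gauss--Bonnet, while a totally geodesic surface satisfies $A=0$ and hence $K=\tfrac12(|H|^2-|A|^2)=0$ by \eqref{Gauss equ.}; thus the totally geodesic (plane) alternative is excluded on topological grounds, and the surface must be the (compact, genus $0$) Whitney sphere. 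Therefore it suffices to prove that a properly immersed Willmore Lagrangian surface $f\colon\Sigma\to\mathbb{C}^2$ of genus $0$ satisfies $\check{A}\equiv 0$.

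To exploit the genus hypothesis I would pass to the conformal structure induced by $f$: since $\Sigma$ is closed of genus $0$ it is biholomorphic to $\mathbb{CP}^1$, on which every holomorphic differential $\varphi\,dz^{\otimes n}$ with $n\ge 1$ vanishes identically because the bundle $K^{\otimes n}$ has negative degree. The idea is to encode $\check{A}$ as a holomorphic differential and let this vanishing force $\check{A}=0$. As $\check{A}_{ijk}$ is fully symmetric and trace--free (the feature already used in Proposition~3.1), in a conformal coordinate $z$ it carries only the two independent components assembled into $\Phi:=\check{A}(\partial_z,\partial_z,\partial_z)=\tfrac12(\check{A}_{111}-i\check{A}_{112})$, so that $\Phi\,dz^{\otimes 3}$ is a well--defined cubic differential whose vanishing is equivalent to $\check{A}\equiv 0$. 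A direct computation from the Codazzi equations \eqref{Codazzi equ.}, together with trace--freeness, gives $\bar\partial\Phi=\tfrac12\,g_{z\bar z}\,T_{zz}$; that is, the failure of $\Phi\,dz^{\otimes 3}$ to be holomorphic is measured precisely by the tensor $T$.

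At this point the Willmore hypothesis enters through Lemma~4.1: the equation $\mathrm{W}=0$ is equivalent, in the dual form \eqref{E-L equ.2}, to $2\nabla^{\ast}T=\tfrac12|\check{A}|^2 H\lrcorner\omega+Q(\check{A})H+\check{A}\lrcorner\omega(JH,JH)$, so that $\nabla^{\ast}T$ is controlled by zeroth--order expressions in $\check{A}$. The plan is to combine this with the identity above to manufacture a genuinely holomorphic object on $\mathbb{CP}^1$. Concretely I would form the conformal quartic differential built from the trace--free second fundamental form, which for a Lagrangian surface reduces to $\Phi\,h_z\,dz^{\otimes 4}$ with $h_z=H\lrcorner\omega(\partial_z)$, and which one expects to be $\bar\partial$--closed along Willmore surfaces; its vanishing on the sphere then yields $\Phi\,h_z\equiv 0$. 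Since there is no closed minimal surface in Euclidean space, $H\not\equiv 0$ and hence $h_z\not\equiv 0$, and a unique--continuation argument for the elliptic equations satisfied by $\Phi$ and $h_z$ upgrades the product relation to $\Phi\equiv 0$, i.e. $\check{A}\equiv 0$.

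I expect the main obstacle to be exactly this holomorphicity step: verifying that the quartic (equivalently, a suitable correction of the cubic $\Phi\,dz^{\otimes 3}$) is annihilated by $\bar\partial$ requires using the full Willmore equation through Lemma~4.1 to cancel the $T$--contribution produced by Codazzi, and this is where the conformal invariance of $\mathcal{W}$ and the coupling between $\check{A}$ and the Hessian $T$ of the Lagrangian angle $\beta$ (note $h_i=\nabla_i\beta$ and $T_{ij}=\nabla_i\nabla_j\beta-\tfrac12(\Delta\beta)g_{ij}$) must be handled carefully. The secondary technical point is the passage from $\Phi\,h_z\equiv 0$ to $\Phi\equiv 0$, which relies on the nonexistence of closed minimal Lagrangian surfaces in $\mathbb{C}^2$ together with unique continuation.
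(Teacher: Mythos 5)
First, a point of reference: the paper itself contains no proof of this statement. It is quoted, with attribution, as Corollary~1 of Castro--Urbano \cite{CU01}, and is used in Section~4 only as a black box (together with \cite{KS01}) to deduce the gap corollary for Willmore Lagrangian surfaces. So your attempt is a from-scratch reconstruction of Castro--Urbano's theorem, and it has to be judged on its own merits rather than against anything in this paper.

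As a reconstruction it has the right skeleton but two genuine gaps, both at the places you yourself flag. The reduction to $\check{A}\equiv 0$ is fine (a closed surface in $\mathbb{C}^2$ cannot be totally geodesic, so the Castro--Urbano/Ros--Urbano alternative leaves only the Whitney sphere), and the identity $\bar\partial\Phi=\tfrac12 g_{z\bar z}T_{zz}$ is correct: it is precisely the 1993 observation of Castro--Urbano that the cubic differential is holomorphic exactly when the Maslov form is conformal, i.e.\ $T=0$. The first and decisive gap is the holomorphicity of your quartic $\Phi h_z\,dz^{\otimes 4}$. Since the Maslov form is closed and $T$ is trace-free, one has $\nabla_{\bar z}h_z=\tfrac12(\operatorname{div}JH)\,g_{z\bar z}$, hence
\begin{equation*}
\bar\partial(\Phi h_z)=\tfrac12\,g_{z\bar z}\bigl(h_z T_{zz}+\Phi\,\operatorname{div}JH\bigr),
\end{equation*}
so holomorphicity would require the \emph{pointwise algebraic} identity $h_z T_{zz}=-\Phi\,\operatorname{div}JH$. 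But the Willmore equation, even rewritten via Lemma~4.1 as $2\nabla^{*}T=\tfrac12|\check{A}|^2H\lrcorner\omega+Q(\check{A})H+\check{A}\lrcorner\omega(JH,JH)$, is only a first-order (divergence-type) constraint on $T$; it contracts $T$ with the metric in a different slot and cannot produce that pointwise identity. The quartic differentials that genuinely are holomorphic on Willmore surfaces in $\mathbb{R}^4$ are of Bryant type, built from the conformal Gauss map, and contain correction terms quadratic in \AA{} and $H$ --- they are not the naive product $\Phi h_z$. The second gap is the passage from $\Phi h_z\equiv 0$ to $\Phi\equiv 0$: on the (a priori possibly nonempty) interior of $\{H=0\}$ the surface is minimal Lagrangian and $h_z$ carries no information, so you must either rule out such open sets for a closed Willmore Lagrangian surface or run unique continuation for a concrete elliptic system that you have not written down. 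Until the corrected quartic is identified and shown to be $\bar\partial$-closed, and the vanishing-set issue is resolved, this is a plausible plan, not a proof.
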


Then we obtain a gap-lemma for Willmore Lagrangian surfaces.

\begin{cor}[Gap theorem for Willmore Lagrangian surfaces]
	Let $f:\Sigma\to\mathbb{C}^2$ be a closed Lagrangian surface satisfies $\operatorname{W}=0$, then there exists a universal constant $\epsilon_0$ such that if
	\begin{equation*}
	\int_{\Sigma}|\check{A}|^2 d\mu\leq\epsilon_0,
	\end{equation*}
 $f$ is a Whitney immersion.
\end{cor}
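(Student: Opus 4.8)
The plan is to derive the corollary from the Castro--Urbano uniqueness theorem for Willmore Lagrangian spheres \cite{CU01}, using the smallness of $\int_\Sigma|\check{A}|^2\,d\mu$ only to pin down the topology of $\Sigma$. I would first observe that Theorem \ref{Theorem 1.2} cannot be quoted verbatim here, because a Willmore Lagrangian surface satisfies $\operatorname{W}=0$ and not $\nabla^*T=0$; by the reformulation (\ref{E-L equ.2}) the two conditions differ by terms quadratic in $\check{A}$ together with $H$. Accordingly the strategy splits into two independent steps: show that the hypotheses force $\Sigma$ to be a topological sphere, and then apply the genus-$0$ classification of \cite{CU01} (which needs no smallness).

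The key step is a Gauss--Bonnet computation. Combining the Gauss equation (\ref{Gauss equ.}) with Proposition 3.1 gives the pointwise identity $K=\tfrac18|H|^2-\tfrac12|\check{A}|^2$ already used in the proof of Lemma 3.1, and integrating it over the closed surface $\Sigma$ yields
\begin{equation*}
2\pi\chi(\Sigma)=\int_\Sigma K\,d\mu=\frac18\int_\Sigma|H|^2\,d\mu-\frac12\int_\Sigma|\check{A}|^2\,d\mu.
\end{equation*}
Next I would invoke the classical Willmore inequality $\int_\Sigma|H|^2\,d\mu\geq 16\pi$, valid for any closed immersed surface in Euclidean space (hence in $\mathbb{C}^2=\mathbb{R}^4$). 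Together with $\int_\Sigma|\check{A}|^2\,d\mu\leq\epsilon_0$ this gives $2\pi\chi(\Sigma)\geq 2\pi-\tfrac12\epsilon_0$, so that $\chi(\Sigma)\geq 1-\tfrac{\epsilon_0}{4\pi}$. Choosing the universal constant $\epsilon_0<4\pi$ makes the right-hand side positive, forcing the integer $\chi(\Sigma)$ to satisfy $\chi(\Sigma)\geq 1$.

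To conclude I would upgrade $\chi(\Sigma)\ge 1$ to $\chi(\Sigma)=2$. A connected closed surface always has $\chi\le 2$, so $\chi(\Sigma)\in\{1,2\}$, and I would rule out $\chi=1$ (that is, $\Sigma\cong\mathbb{RP}^2$) by the topological fact that a closed surface admitting a Lagrangian immersion into $\mathbb{C}^2$ has even Euler characteristic; in particular $\mathbb{RP}^2$ carries no Lagrangian immersion into $\mathbb{C}^2$. Hence $\chi(\Sigma)=2$, i.e.\ $\Sigma$ is a topological sphere, and since $f$ is Willmore and Lagrangian the uniqueness theorem of \cite{CU01} identifies it with a Whitney immersion.

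I expect the genuine obstacle to be this last topological step rather than any estimate: one has to justify the evenness of $\chi(\Sigma)$, via the Whitney congruence for the normal Euler number of a surface in a $4$-manifold together with the identification $J T\Sigma = N\Sigma$ furnished by the Lagrangian condition, and to confirm that the Willmore inequality is applied legitimately in codimension two. It is also worth recording why the purely analytic route fails: substituting $\operatorname{W}=0$ into Lemma 3.1 through (\ref{E-L equ.2}) makes the coefficient of $\int_\Sigma|H|^2|\check{A}|^2\,d\mu$ degenerate to zero and leaves the cubic term $\int_\Sigma\check{A}_{ijk}h_ih_jh_k\,d\mu$, whose sign is indefinite and which the smallness of $\check{A}$ alone cannot absorb; this is exactly why the topological argument above is preferable.
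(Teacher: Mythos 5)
Your proof is correct, but the middle of your argument takes a genuinely different route from the paper's. Both proofs open with the same Gau\ss--Bonnet computation: your identity $2\pi\chi(\Sigma)=\tfrac18\int_\Sigma|H|^2d\mu-\tfrac12\int_\Sigma|\check{A}|^2d\mu$ is equivalent to the paper's $\int_\Sigma|\check{A}|^2d\mu=\tfrac12\int_\Sigma|\text{\AA{}}|^2d\mu-2\pi\chi(\Sigma)$, both coming from (\ref{Gauss equ.}) and (\ref{(2.1)}). The paper then concludes only $\chi(\Sigma)\geq 0$ (``sphere or torus'') and eliminates the torus analytically: if $\chi=0$ then $\int_\Sigma|\text{\AA{}}|^2d\mu=2\int_\Sigma|\check{A}|^2d\mu$ is small, and the Kuwert--Sch\"atzle gap theorem \cite{KS01} forces a closed Willmore surface with small trace-free energy to be a round sphere, a contradiction. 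You instead invoke the classical Willmore (Chen) inequality $\int_\Sigma|H|^2d\mu\geq 16\pi$ --- which requires no Willmore equation and is indeed valid in arbitrary codimension, with the factor $16\pi$ correct for the convention $H=\operatorname{tr}A$ --- to get $\chi(\Sigma)\geq 1$ outright once $\epsilon_0<4\pi$, and then use the parity obstruction for Lagrangian immersions to promote $\chi\geq 1$ to $\chi=2$; both routes finish with Castro--Urbano \cite{CU01}. Your route buys two things: it is more self-contained (no appeal to the nontrivial analytic gap theorem of \cite{KS01}, and it yields an explicit universal constant $\epsilon_0<4\pi$), and it honestly disposes of the non-orientable cases that the paper's ``sphere or torus'' dichotomy passes over in silence: the Klein bottle ($\chi=0$) dies by your Willmore-inequality step and $\mathbb{RP}^2$ ($\chi=1$) by parity. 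One caution on the step you yourself flag as the genuine obstacle: the Whitney--Massey congruence for the normal Euler number is a theorem about \emph{embedded} surfaces in $4$-manifolds and does not hold for immersions (the Whitney sphere itself violates it), so you should not route the parity claim through it; the clean justification is the characteristic-class argument you half-indicate, namely that $f^*T\mathbb{C}^2=T\Sigma\oplus N\Sigma$ is trivial and $N\Sigma=JT\Sigma\cong T\Sigma$, hence $T\Sigma\oplus T\Sigma$ is trivial, so $0=w_2(T\Sigma\oplus T\Sigma)=w_1(T\Sigma)^2=w_2(T\Sigma)$ by Wu's formula on a closed surface, and therefore $\chi(\Sigma)\equiv 0\pmod 2$. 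With that substitution your argument is complete.
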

\begin{proof}
	Using the Gau\ss{}-Bonnet formula and (\ref{(2.1)}), we see that if $f$ is Lagrangian immersion, we have
	\begin{equation*}
	\int_{\Sigma}|\check{A}|^2d\mu=\frac{1}{2}\int_{\Sigma}|\text{\AA{}}|^2 d\mu-2\pi\chi(\Sigma).
	\end{equation*}
	If $\int_{\Sigma}|\check{A}|^2d\mu$ is sufficiently small, $\chi(\Sigma)$ must be non-negative, which means that it is either a sphere or a torus. However the result of \cite{KS01} eliminates the torus case given the Willmore energy is sufficiently small.
\end{proof}

\section*{Acknowledgement}
	
	The author would like to thank professor Guofang Wang for his many inspiring discussions. The project is supported by  "Willmore Funktional und Langrangsche Fl\"{a}chen" in SPP 2026 of DFG.


\begin{thebibliography}{10}
		
		\bibitem{A90} Anderson, M.T.: Convergence and rigidity of manifolds under Ricci curvature bounds. Invent. Math. 102.1, 429-445(1990)
		
		\bibitem{AU01} Audin, M., Da Silva, A.C.,Lerman, E.: Symplectic geometry of integrable Hamiltonian systems. Birkhäuser(2012)
		
		\bibitem{BR} Bernard, Y., Riviere, T.: An Energy Gap Phenomenon for Willmore Spheres. preprint.(2011). \url{https://people.math.ethz.ch/~riviere/pdf/pub/willmore-gap.pdf}
		
		\bibitem{CU93} Castro, I., Urbano, F.: Lagrangian surfaces in the complex Euclidean plane with conformal Maslov form. Tohoku Mathematical Journal, Second Series. 45.4, 565-582(1993)
		
		\bibitem{CU01} Castro, I., Urbano, F.: Willmore Surfaces of $\mathbb{R}^4$ and the Whitney Sphere. Annals of Global Analysis and Geometry. 19.2, 153-175(2001)
		
		\bibitem{F80} Fischer-Colbrie, D.: Some rigidity theorems for minimal submanifolds of the sphere. Acta Mathematica. 145.1, 29-46(1980)
		
		\bibitem{KS01} Kuwert, E., Sch\"{a}tzle, R.: The Willmore flow with small initial energy. Journal of Differential Geometry. 57.3, 409-441(2001)
		
		\bibitem{L69} Lawson, H.B.: Local rigidity theorems for minimal hypersurfaces. Ann. of Math.(2). 89.1, 187-197(1969)
		
		\bibitem{LW15} Luo, Y., Wang, G.: On geometrically constrained variational problems of the Willmore functional I: The Lagrangian-Willmore problem. Communications in Analysis and Geometry. 23.1, 191-223(2015)
		
		
		\bibitem{MS73} Michael, J.H., Simon, L.M.: Sobolev and mean‐value inequalities on generalized submanifolds of $\mathbb{R}^n$. Communications on Pure and Applied Mathematics. 26.3, 361-379(1973)
		
		\bibitem{RU98} Ros, A., Urbano, F.: Lagrangian submanifolds of $\mathbb{C}^n$ with conformal Maslov form and the Whitney sphere. Journal of the Mathematical Society of Japan. 50.1, 203-226(1998)		
		
		\bibitem{S81} Sacks, J. and Uhlenbeck, K.: The existence of minimal immersions of 2-spheres. Annals of mathematics. 1-24(1981)
		
		\bibitem{G15} Wheeler, G.: Gap phenomena for a class of fourth-order geometric differential operators on surfaces with boundary. Proceedings of the American Mathematical Society. 143.4, 1719-1737(2015)
		
		\bibitem{Y09} Yokota, T.: Perelman's reduced volume and a gap theorem for the Ricci flow. Comm. Anal. Geom. 17.2, 227-263(2009)
		
		
		
		 
		
		
		
		
		
		
		
		
		
		
		
		
		
		
		
		
		
		
		
	\end{thebibliography}

\end{document}